\newcommand{\NN}{{\mathbb N}}
\newcommand{\RR}{{\mathbb R}}
\newcommand{\TT}{{\mathbb T}}
\newcommand{\I}{\on{I}}
\newcommand{\II}{\on{II}}
\newcommand{\abs}[1]{ \left| #1 \right|}
\newcommand{\del}{\partial}
\newcommand{\DIV}{\on{div}}
\newcommand{\eps}{\varepsilon}
\newcommand{\Id}{\mathrm{Id}}
\newcommand{\norm}[1]{ \left\| #1 \right\| }
\newcommand{\nor}[2]{\left\|#1\right\|_{#2}}
\newcommand{\oline}[1]{\overline{#1}}
\newcommand{\oo}{\infty}
\newcommand{\pars}[1]{\left(#1\right)}
\newcommand{\tr}{\on{tr}}
\newcommand{\mcl}{\mathcal}
\newcommand{\on}{\operatorname}
\newtheorem{lemma}{Lemma}
\newtheorem{proposition}{Proposition}
\newtheorem{theorem}{Theorem}
\theoremstyle{definition}
\numberwithin{equation}{section}
\numberwithin{lemma}{section}
\numberwithin{proposition}{section}
\numberwithin{theorem}{section}
\numberwithin{corollary}{section}
\numberwithin{definition}{section}
\begin{document}
\title[Reduced mean field games]{Dimension reduction techniques in deterministic mean field games}
\author[J.-M. Lasry, P.-L. Lions, B. Seeger]{Jean-Michel Lasry$^{1}$, Pierre-Louis Lions$^{1,2}$, Benjamin Seeger$^{1,3}$}
\address{$^1$Universit\'e Paris-Dauphine \& Coll\`ege de France \\ Place du Mar\'echal de Lattre de Tassigny \\ 75016 Paris, France}
\email{$^2$lions@ceremade.dauphine.fr, $^3$seeger@ceremade.dauphine.fr}

%\thanks{Partially supported by the National Science Foundation Mathematical Sciences Postdoctoral Research Fellowship under Grant Number DMS-1902658}

\subjclass[2010]{...}
\keywords{Mean field games,...}

\date{\today}

\maketitle

\begin{abstract}
	We present examples of equations arising in the theory of mean field games that can be reduced to a system in smaller dimensions. Such examples come up in certain applications, and they can be used as modeling tools to numerically approximate more complicated problems. General conditions that bring about reduction phenomena are presented in both the finite and infinite state-space cases. We also compare solutions of equations with noise with their reduced versions in a small-noise expansion.
\end{abstract}

\section{Introduction}

This paper is concerned with deterministic mean field games (MFG) models in which certain dimension reduction phenomena can be observed. A variety of situations are considered, including both the forward-backward system and the master equation, with state spaces that are finite or infinite. We also study a forward-backward system with strong coupling, often referred to in the literature as ``mean field games of controls,'' and we give some well-posedness results based on the occurrence of the dimension reduction.

We emphasize that the reductions considered in this paper are rather straightforward, and the mathematical developments are kept as simple as possible. The general reduction phenomena considered here can be observed in some specific applications to economics or telecommunications; see for instance \cite{CL}, where such exact reductions are observed in an application to trade crowding. In general, the precise assumptions that give rise to dimension reduction may not be satisfied by more complicated systems, in particular those that incorporate noise. In such situations, the results of this paper suggest ways to build good approximate models with high dimension reduction. This is essential for creating models that are both numerically computable and easily interpretable, thanks to the small number of reduced variables. As a proof of concept, we present an example of a model with noise, for which we prove that the solution is close to a small-noise expansion that exhibits dimension reduction.

The study of mean field games goes back to the works of Lasry and Lions \cite{LL,LLfr1,LLfr2} and Huang, Malham\'e, and Caines \cite{HMC}, and, since then, the field has received a great deal of attention. The models arising in MFG describe differential games between a very large number of indistinguishable players, or agents. When the games are in a Nash equilibrium and the number of agents approaches infinity, the model can be described by a system of two equations: a backward Hamilton-Jacobi-Bellman equation, whose solution is the value function of a representative player; and a forward Fokker-Planck equation that describes the evolution of the population. The mean field games system can be reformulated in terms of the master equation introduced by Lasry and Lions \cite{Lectures}, which is a single equation for the value function that is set on an infinite dimensional space of measures. The merit of this equation is that it can take into account a variety of extensions and more complex models that the forward-backward system fails to encompass. In addition, it is a natural tool to study the infinite-player limit in many situations, including in the presence of common noise. For more details, see Carmona and Delarue \cite{CDmaster} and the book of Cardaliaguet, Delarue, Lasry, and Lions \cite{CDLL}. Alternatively to the PDE approach, mean field games have also been extensively studied from the probabilistic point of view; for a thorough treatment, see the books of Carmona and Delarue \cite{CD1,CD2}.

{\bf Outline of the paper.} For most of the situations considered in this paper, we identify general algebraic conditions that result in a dimension-reduced problem. Such conditions are supported with simple, but representative, examples. The reduced problem can generally be seen to admit a classical solution, which then leads to a solution of the original problem. This is done for a finite state space in Section \ref{S:finite}, and for a continuous state space in Section \ref{S:continuous}. In both sections, both the master equation and the forward-backward system are considered. In Section \ref{S:controls}, we study a forward backward system with strong coupling, and we present a reduced system that consists of a standard forward-backward system coupled with an ordinary differential equation, which admits an existence result. Finally, in Section \ref{S:smallnoise}, we return to the finite state space setting and incorporate a small common noise term, and we justify a formal small noise expansion.

{\bf Notation.} The adjoint of a linear map $A: \RR^m \to \RR^m$ is denoted by $A^*$, and its trace and determinant, respectively, by $\tr A$ and $\det A$. The Euclidean inner product on $x,y \in \RR^m$ is denoted by $\langle x,y \rangle$.

We set
\[
	\mcl P = \mcl P(\RR^d) := \left\{ m : m \text{ is a positive Borel measure on $\RR^d$ and } \int_{\RR^d} m(dx) = 1 \right\},
\]
and, for $q \ge 1$,
\[
	\mcl P_q = \mcl P_q(\RR^d) := \left\{ m \in \mcl P : \int_{\RR^d} |x|^q m(dx) < \oo \right\}.
\]
%We also consider the subspaces of probability densities
%\[
%	\tilde{\mcl P} := \left\{ m \in \mcl P : m = f \; dx \text{ for some } f \in L^1(\RR^d) \right\} \quad \text{and} \quad \tilde{\mcl P}_q := \mcl P_q \cap \tilde{\mcl P}.
%\]
The convex set $\mcl P_q$ is equipped with the $q$-Wasserstein distance, which, for $m_1,m_2 \in \mcl P_q$, is given by
\[
	d_q(m_1,m_2) := \inf_{\gamma \in \Pi(m_1,m_2) } \pars{ \iint_{\RR^d \times \RR^d} |x-y|^q \gamma(dxdy) }^{1/q},
\]
where $\Pi(m_1,m_2)$ is the set of Borel probability measures on $\RR^d \times \RR^d$ such that $\gamma(A \times \RR^d) = m_1(A)$ and $\gamma(\RR^d \times B) = m_2(B)$ for any Borel subsets $A$ and $B$ of $\RR^d$.

Given $x \in \RR^d$, $\delta_x \in \mcl P$ denotes the Dirac delta measure centered at $x$. For $m \in \mcl P(\RR^d)$ and a Borel measurable map $f: \RR^d \to \RR^n$, $f_\sharp m \in \mcl P(\RR^n)$ is the measure defined, for Borel $A \subset \RR^n$, by
\[
	f_\sharp m(A) := m(f^{-1}(A)).
\]

\section{Finite state space}\label{S:finite}

We first consider games for which there is a finite number $N$ of discrete states, labeled with the index set $\{1,2,\ldots,N\}$. The variables $x^i$ or $X^i$ below represent the concentration of players in the state $i$, while $U^i$ denotes the value function of a typical player in state $i$. We note also that the equations we consider here take the same form as those that arise in later sections as dimension-reduced problems of some continuous-state mean field games models.

We emphasize that the equations studied below are slightly more general that those arising in MFG situations. For more details, see, for instance, \cite{BLL,BLLplanning, Lectures}.

\subsection{The general set-up and the master equation}
In the finite state space case, the master equation is a non-conservative, hyperbolic $N\times N$ system of equations given, for some smooth $F: \RR^N \times \RR^N \to \RR^N$, $G: \RR^N \times \RR^N \to \RR^N$, and $U_0: \RR^N \to \RR^N$, by
\begin{equation}\label{E:finitestate}
	\del_t U + \left[ F(x,U) \cdot \nabla_x \right]U = G(x,U) \quad \text{in } \RR^N \times (0,T], \quad U(0,\cdot) = U_0 \quad \text{in } \RR^N,
\end{equation}
or, coordinate by coordinate, for each $i = 1,2,\ldots, N$,
\[
	\del_t U^i + \sum_{j=1}^N F^j(x,U) \del_{x_j} U^i = G^i(x,U).
\]

We will consider different regimes of well-posedness for the equation \eqref{E:finitestate}. First, the smoothness of $F$, $G$, and $U_0$ are enough to ensure that a unique classical solution of \eqref{E:finitestate} exists for a sufficiently small time horizon $T > 0$. This is constructed with the method of characteristics, as discussed in the next sub-section.

In order for a unique global solution to exist for arbitrary $T > 0$, further structural properties are required, which we take here to be monotonicity.

We say a map $A: \RR^M \to \RR^M$ is monotone if 
\begin{equation}\label{A:monotone}
	\langle A(x) - A(y), x - y \rangle \ge 0 \quad \text{for all } x,y \in \RR^M,
\end{equation}
and is strictly monotone if
\begin{equation}\label{A:monotone}
	\langle A(x) - A(y), x - y \rangle > 0 \quad \text{for all } x,y \in \RR^M \text{ with } x \ne y.
\end{equation}
%At times, we require a stricter condition: for some $\alpha > 0$, we say that $A$ is $\alpha$-monotone if
%\begin{equation}\label{A:alphamonotone}
%	\langle A(x) - A(y), x - y \rangle \ge \alpha|x-y|^2 \quad \text{for all } x,y \in \RR^M.
%\end{equation}
We will assume that
\begin{equation}\label{A:FGU0monotone}
	\left\{
	\begin{split}
	&(G,F): \RR^{2N} \to \RR^{2N} \quad \text{and} \quad U_0: \RR^N \to \RR^N \quad \text{are monotone, and}\\
	&\text{either $F$ or $G$ is strictly monotone.}
	\end{split}
	\right.
\end{equation}

\subsection{The reduction}
A reduction in dimension will be observed if the various data depend on the distribution $x$ of players only through $n$ reduced quantities, for some integer $n < N$. We represent this with a map $L$ satisfying
\begin{equation}\label{A:linear}
	L: \RR^N \to \RR^n \quad \text{is linear and surjective}.
\end{equation}
We denote by $L^*$ the adjoint of $L$. A consequence of the surjectivity of $L$ is that
\begin{equation}\label{A:linearstar}
	L^*: \RR^n \to \RR^N \quad \text{is injective}.
\end{equation}

We will consider two types of ``reduced'' nonlinearities. First, a map $A: \RR^N \to \RR^N$ is said to completely reduce to $\tilde A: \RR^n \to \RR^n$ if
\begin{equation}\label{A:completereduce}
	A(x) = L^* \tilde{A}(Lx) \quad \text{for all } x \in \RR^N.
\end{equation}
That is, $A$ depends on $x$ only through $Lx$, and, moreover, $Ax$ is perpendicular to the fibers of $L$, since
\[
	\langle A(x), x_0 \rangle = \langle \tilde{A}(Lx), Lx_0 \rangle = 0 \quad \text{for all } x_0 \text{ with } Lx_0 = 0.
\]

We say a map $A: \RR^N \to \RR^N$ fiber-reduces to $\tilde A: \RR^n \to \RR^n$ if
\begin{equation}\label{A:fiberreduce}
	L A(x) = \tilde A(Lx) \quad \text{for all } x \in \RR^N.
\end{equation}
Geometrically, $A$ maps fibers of $L$ to fibers of $L$. That is, \eqref{A:fiberreduce} is equivalent to requiring that, if $Lx_1 = Lx_2$, then $LA(x_1) = LA(x_2)$.

We remark that both concepts of reduction depend on the map $L$, which remains fixed throughout this section.

We shall assume that
\begin{equation}\label{A:FGU0reduce}
	\left\{
	\begin{split}
		&(x \mapsto G(x,U)) \text{ and } U_0 \text{ completely reduce for each fixed $U \in \RR^N$, and}\\
		&(x \mapsto F(x,U)) \text{ fiber-reduces for each fixed $U \in \RR^N$.}
	\end{split}
	\right.
\end{equation}
With a slight relabelling, this means that there exist $\tilde F: \RR^n \times \RR^n \to \RR^n$, $\tilde G: \RR^n \times \RR^n \to \RR^n$, and $\tilde U_0: \RR^n \to \RR^n$ such that, for all $x \in \RR^N$ and $u \in \RR^n$,
\begin{equation}\label{A:FGU0reductionformulae}
	G(x,L^*u) = L^* \tilde G(Lx,u), \quad LF(x,L^*u) = \tilde F(Lx,u), \quad \text{and} \quad U_0(x) = L^* \tilde U_0(L x).
\end{equation}
This leads, formally, to the $n\times n$ system
\begin{equation}\label{E:reducedfinitestate}
	\del_t \tilde U + \left[ \tilde F(y,\tilde U) \cdot \nabla_y \right]\tilde U = \tilde G(y,\tilde U) \quad \text{in } \RR^n \times (0,T], \quad \tilde U(0,\cdot) = \tilde U_0.
\end{equation}

\begin{theorem}\label{T:reducedfinitestate}
	Assume, for some $L$ satisfying \eqref{A:linear}, that $F$, $G$, and $U_0$ satisfy \eqref{A:FGU0monotone} and \eqref{E:reducedfinitestate}. Then
	\begin{equation}\label{tildeGFU0monotone}
		(\tilde G,\tilde F): \RR^{2n} \to \RR^{2n} \quad \text{and} \quad \tilde U_0: \RR^n \to \RR^n \quad \text{are monotone}
	\end{equation}
	and, moreover, there exist unique classical solutions $U$ and $\tilde U$ of respectively \eqref{E:finitestate} and \eqref{E:reducedfinitestate}, which are related by
\begin{equation}\label{finitestateformula}
	U(t,x) = L^* \tilde U(t,Lx).
\end{equation}
\end{theorem}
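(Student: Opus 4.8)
The plan is to proceed in three stages: first establish the monotonicity claim \eqref{tildeGFU0monotone} for the reduced data; then use that monotonicity (together with the stated smoothness) to obtain a unique global classical solution $\tilde U$ of the reduced system \eqref{E:reducedfinitestate} via the method of characteristics; and finally verify that $U(t,x) := L^*\tilde U(t,Lx)$ is a classical solution of the original system \eqref{E:finitestate}, invoking uniqueness for \eqref{E:finitestate} to conclude it is \emph{the} solution.

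For the first stage, I would deduce monotonicity of $\tilde U_0$ from that of $U_0$ as follows. Given $u,v\in\RR^n$, by surjectivity of $L$ choose $x,y\in\RR^N$ with $Lx=u$, $Ly=v$. Using $U_0(x)=L^*\tilde U_0(Lx)$ from \eqref{A:FGU0reductionformulae} and the adjoint relation,
\[
	0 \le \langle U_0(x)-U_0(y),\, x-y\rangle = \langle L^*\tilde U_0(u) - L^*\tilde U_0(v),\, x-y\rangle = \langle \tilde U_0(u)-\tilde U_0(v),\, L x - Ly\rangle = \langle \tilde U_0(u)-\tilde U_0(v),\, u-v\rangle.
\]
The same computation, applied to the joint map $(G,F)$ evaluated along points of the form $(Lx, L^*u)$, yields monotonicity of $(\tilde G,\tilde F)$: for $(x_1,u_1),(x_2,u_2)$, writing $p_i = Lx_i$, $q_i = u_i$, the pairing $\langle (G,F)(x_1,L^*u_1) - (G,F)(x_2,L^*u_2),\, (x_1,L^*u_1)-(x_2,L^*u_2)\rangle$ equals, after substituting \eqref{A:FGU0reductionformulae} and pushing $L^*$ across inner products, exactly $\langle (\tilde G,\tilde F)(p_1,q_1) - (\tilde G,\tilde F)(p_2,q_2),\, (p_1,q_1)-(p_2,q_2)\rangle$; nonnegativity of the left side forces nonnegativity of the right. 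One subtlety: the original monotonicity \eqref{A:monotone} is a statement about \emph{all} pairs of points in $\RR^{2N}$, whereas we only test it on the subspace $L(\RR^N)\times L^*(\RR^n)$ — but that is fine, since we only need the reduced inequality on $\RR^{2n}$, and every such pair is hit. (Strict monotonicity of $F$ or $G$ does not obviously descend, but it is not claimed in \eqref{tildeGFU0monotone}; it is used at the level of the original system to get global existence there.)

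For the second and third stages, the global well-posedness of both \eqref{E:finitestate} and \eqref{E:reducedfinitestate} under smoothness plus \eqref{A:FGU0monotone}/its reduced analog is the standard monotone hyperbolic system result referenced in the paragraph before the theorem — I would cite that rather than reprove it, having just verified the reduced data satisfies the required monotonicity. The verification that $U(t,x)=L^*\tilde U(t,Lx)$ solves \eqref{E:finitestate} is a direct chain-rule computation: $\del_t U = L^*\del_t\tilde U(t,Lx)$, and $\nabla_x U = L^*\,(\nabla_y\tilde U)(t,Lx)\,L$ (i.e., $\del_{x_j}U = L^*\sum_k (\del_{y_k}\tilde U) L_{kj}$), so that
\[
	[F(x,U)\cdot\nabla_x]U = L^*\big[(LF(x,L^*\tilde U(t,Lx)))\cdot\nabla_y\big]\tilde U(t,Lx) = L^*\big[\tilde F(Lx,\tilde U(t,Lx))\cdot\nabla_y\big]\tilde U(t,Lx),
\]
using the fiber-reduction identity $LF(x,L^*u)=\tilde F(Lx,u)$; adding $\del_tU$ and comparing with $G(x,U)=L^*\tilde G(Lx,\tilde U(t,Lx))$ shows the equation holds after applying $L^*$ to \eqref{E:reducedfinitestate}, and injectivity of $L^*$ is not even needed for this direction. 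The initial condition matches because $U(0,x)=L^*\tilde U_0(Lx)=U_0(x)$. Since \eqref{E:finitestate} has a unique classical solution, $U$ must be it, giving \eqref{finitestateformula}.

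The main obstacle — really the only place requiring care — is making sure the reduced data inherit \emph{exactly} the hypotheses needed for the cited global existence theorem. Monotonicity descends cleanly as above, but one should double-check that the smoothness of $\tilde F,\tilde G,\tilde U_0$ follows: this is immediate since, e.g., $\tilde F(Lx,u)=LF(x,L^*u)$ expresses $\tilde F$ on all of $\RR^n\times\RR^n$ (every argument pair is of this form by surjectivity of $L$) as a composition of smooth maps, and it is well-defined precisely because the fiber-reduction property \eqref{A:fiberreduce} guarantees independence of the choice of preimage $x$; similarly for $\tilde G$ and $\tilde U_0$ via complete reduction. Once that bookkeeping is in place, the rest is the routine chain-rule check above.
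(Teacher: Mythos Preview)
Your proposal is correct and follows essentially the same approach as the paper: establish monotonicity of the reduced data via the adjoint relation and surjectivity of $L$, cite the standard well-posedness theory for both systems, and verify \eqref{finitestateformula} by a direct chain-rule computation (which the paper leaves as ``a simple calculation''). Your observation that strict monotonicity of $F$ or $G$ does not obviously descend to $(\tilde G,\tilde F)$ is a fair point that the paper likewise glosses over when invoking well-posedness for the reduced system.
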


\begin{proof}
Assume $U_0$ is monotone. Then, for all $x,y \in \RR^N$,
\[
	0 \le \langle U_0(x) - U_0(y) , x-y \rangle = \langle L^*(\tilde U_0(Lx) - \tilde U_0(Ly)), x-y \rangle 
	= \langle \tilde U_0(Lx) - \tilde U_0(Ly),Lx - Ly \rangle.
\]
The monotonicity of $\tilde U_0$ then follows from the surjectivity of $L$.

Now assume that $(G, F)$ is monotone and let $(x_1,u_1),(x_2,u_2) \in \RR^n \times \RR^n$. Then there exist $X_1,X_2 \in \RR^N$ such that $LX_j = x_j$, $j = 1,2$. We then compute
\begin{align*}
	\langle &\tilde G(x_1,u_1) - \tilde G(x_2,u_2),x_1 - x_2 \rangle + \langle \tilde F(x_1,u_1) - \tilde F(x_2,u_2),u_1-u_2 \rangle\\
	&= \langle \tilde G(LX_1,u_1) - \tilde G(LX_2,u_2),L(X_1 - X_2) \rangle + \langle \tilde F(LX_1,u_1) - \tilde F(LX_2,u_2),u_1-u_2 \rangle\\
	&= \langle G(X_1,L^*u_1) - G(X_1,L^* u_2), X_1 - X_2 \rangle + \langle F(X_1,L^*u_1) - F(X_2,L^* u_2), L^*u_1 - L^* u_2 \rangle \ge 0.
\end{align*}

The existence and uniqueness of solutions to both equations is now standard (see \cite{Lectures}), and the formula \eqref{finitestateformula} can be verified with a simple calculation.
\end{proof}

\subsection{The system of characteristics}

The system of characteristics associated to \eqref{E:finitestate} is
\begin{equation}\label{E:finchars}
	\begin{dcases}
		\dot X = F(X,V), &X(0) = x \\
		\dot V = G(X,V), & V(0) = U_0(x),
	\end{dcases}
\end{equation}
which, of course, is analogous to the mean-field games forward-backward system (here written only in forward form for simplicity). The relation to \eqref{E:finitestate} is through the implicit formula
\[
	U(t,X(t)) = V(t) \quad \text{for } t \in [0,T].
\]
The local well-posedness of \eqref{E:finitestate} is a consequence of the fact that, for sufficiently small $T > 0$, $x \mapsto X(x,t)$ is invertible for all $t \in [0,T]$, while a simple computation shows that $X$ is invertible for all $t > 0$ if $(G,F)$ is monotone. 

We now discuss the consequences of the reducibility assumptions for $G$ and $F$ on \eqref{E:finchars}. First, since $F$ fiber-reduces, we find that, if $x_1$ and $x_2$ belong to the same fiber of $L$, then, for all $t \in [0,T]$, $X(t,x_1)$ and $X(t,x_2)$ also belong to the same fiber of $L$; that is, the equation for $X$ can be interpreted as an evolution of fibers of $L$.

On the other hand, the fact that $G$ completely reduces means that the evolution of $V$ depends only on the fiber of $X$. Moreover, $V$ does not have any motion tangential to the fibers of $L$.

Mathematically, the above remarks mean that the system
\begin{equation}\label{E:reducedchars}
	\begin{dcases}
		\dot {\tilde X} = \tilde F(\tilde X,\tilde V), &\tilde X(0) = x \\
		\dot {\tilde V} = \tilde G(\tilde X,\tilde V), & \tilde V(0) = \tilde U_0(x)
	\end{dcases}
\end{equation}
is related to \eqref{E:finchars} by
\[
	\tilde X(t) = LX(t) \quad \text{and} \quad V(t) = L^*\tilde V(t).
\]

\section{Continuous state space}\label{S:continuous}

\subsection{The master equation}

For a given $H: \RR^d \times \mcl P \times \RR^d \to \RR$ and $G: \RR^d \times \mcl P \to \RR$, we study the master equation
\begin{equation}\label{E:master}
	\left\{
	\begin{split}
	&- \frac{\del U}{\del t} + H(x, m, D_x U) + \int_{\RR^d} D_m U(t,x,m,y) \cdot D_p H(y,m, D_y U(t,y,m))\;m(dy) = 0 \\
	& \qquad \qquad \qquad \qquad \qquad \qquad \qquad \qquad \qquad \text{in } (0,T) \times \RR^d \times \mcl P \text{ and} \\
	&U(T,x,m) = G(x,m)  \qquad \text{in } \RR^d \times \mcl P.
	\end{split}
	\right.
\end{equation}

The analogue of the linear map $L$ from the previous section is given, for some $\phi: \RR^d \to \RR^m$, by
\[
	\mcl P \ni m \mapsto \int_{\RR^d} \phi(y) m(dy) \in \RR^m.
\]
Let us assume that
\begin{equation}\label{A:phi}
	\left\{
	\begin{split}
	&\phi \in C^1(\RR^d,\RR^m), \quad |\phi(x)| \le C(1 + |x|^K) \quad \text{for all } x \in \RR^d \text{ and some } C > 0 \text{ and } K > 0, \text{ and}\\
	&\del \mcl C \subset \phi(\RR^d), \text{ where}\\
	&\mcl C := \left\{ z \in \RR^m : z = \int_{\RR^d} \phi(x) m(dx) \quad \text{for some } m \in \mcl P_K \right\}.
	\end{split}
	\right.
\end{equation}
We note that \eqref{A:phi} implies that the convex set $\mcl C$ is closed, since, for any $x \in \RR^d$,
\[
	\phi(x) = \int_{\RR^d} \phi(y) \delta_x(dy) \in \mcl C.
\]

We impose the following algebraic conditions that reflect the fact that the dependence of \eqref{E:master} on the measure variable $m$ is felt only through the quantity $\int \phi dm$:
\begin{equation}\label{A:Hreduces}
	\left\{
	\begin{split}
		&\text{there exists $h: \mcl C \times \RR^m \to \RR^m$ such that}\\
		&H(x,m,D\phi(x) \cdot \psi) = \phi(x) \cdot h\pars{\int_{\RR^d} \phi(y) m(dy), \psi} \text{ for all } (x,m,\psi) \in \RR^d \times \mcl P_K \times \RR^m,
	\end{split}
	\right.
\end{equation}
and
\begin{equation}\label{A:Greduces}
	\left\{
	\begin{split}
	&\text{for some $g: \mcl C \to \RR^m$ and for all $(x,m) \in \RR^d \times \mcl P_K$},\\
	&G(x,m) = \phi(x) \cdot g\pars{\int_{\RR^d} \phi(y)m(dy)}.
	\end{split}
	\right.
\end{equation}
The dimension-reduced problem is then given, for some $f: \del \mcl C \times [0,T) \to \RR^m$, by the following boundary-terminal-value problem
\begin{equation}\label{E:masterreduced}
\begin{dcases}
	-\del_t u + h(z,u) + z \cdot \del_u h(z,u) \del_z u = 0&\text{in } [0,T) \times \mcl C, \\
	u(T,z) = g(z) & \text{in } \mcl C, \text{ and} \\
	u(t,z) = f(t,z) & \text{on } \del \mcl C \times [0,T).
\end{dcases}
\end{equation}
%We note that the equation can also be written in a quasi-conservative form
%\[
%	-u_t +\del _z \pars{ z h(z,u)} = z (\del_z h)(z,u).
%\]

We now impose conditions on $g$ and $h$, which are related to monotonicity, that make \eqref{E:masterreduced}, and, hence, \eqref{E:master}, into a well-posed problem. The conditions on the boundary value $f$ will be much more restrictive, and, in fact, \eqref{E:masterreduced} is well-posed for only one choice of $f$.

We assume that
\begin{equation}\label{A:hmonotone}
	(z,u) \mapsto (-h(z,u), z \cdot h_u(z,u)) \quad \text{is monotone},
\end{equation}
and
\begin{equation}\label{A:gmonotone}
	g \text{ is strictly monotone in } \mcl C.
\end{equation}

\begin{theorem}\label{T:masterreduced}
	Assume \eqref{A:phi}, \eqref{A:Hreduces}, \eqref{A:Greduces}, \eqref{A:hmonotone}, and \eqref{A:gmonotone}. Then there exists a unique $f \in C^1(\del \mcl C \times [0,T])$ such that \eqref{E:masterreduced} admits a unique classical solution $u$. Moreover, the formula
	\begin{equation}\label{formula}
		U(t,x,m) = \phi(x) u\pars{ t, \int_{\TT^d} \phi(y) m(dy)} \quad \text{for } (t,x,m) \in [0,T] \times \RR^d \times \mcl P_K
	\end{equation}
	defines a classical solution of \eqref{E:master}.
\end{theorem}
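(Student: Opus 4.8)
The plan is to mirror the strategy of the finite-dimensional Theorem \ref{T:reducedfinitestate}: reduce the infinite-dimensional master equation \eqref{E:master} to the finite-dimensional (but boundary-value) problem \eqref{E:masterreduced}, solve the latter by the method of characteristics, and then verify the ansatz \eqref{formula}. The first step is to write down the characteristic system for \eqref{E:masterreduced}, which should have the form
\begin{equation*}
	\dot Z = Z \cdot h_u(Z,W), \quad \dot W = -h(Z,W) + (\text{terms from }z\cdot h_u\partial_z u),
\end{equation*}
and to observe, as in the finite-state case, that the monotonicity assumption \eqref{A:hmonotone} forces the characteristic map $z_0 \mapsto Z(t,z_0)$ to be globally invertible for all $t$, so that $u$ is defined on all of $\mcl C$ up to time $T$. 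The essential structural point, which I expect to be the main obstacle, is the role of the boundary $\del\mcl C$: because \eqref{E:masterreduced} runs backward from $t=T$ with terminal data $g$, the boundary data $f$ cannot be prescribed freely — it must be exactly the trace on $\del\mcl C$ of the solution generated by the characteristics emanating from the terminal surface. Thus the proof of existence and uniqueness of $f$ is really the statement that the characteristic flow, started at $T$ from $\{T\}\times\mcl C$ with $u=g$, foliates $[0,T)\times\mcl C$ and in particular sweeps out $\del\mcl C\times[0,T)$; one then \emph{defines} $f$ to be that trace, and the $C^1$ regularity of $f$ follows from the $C^1$ dependence of characteristics on initial data together with the (strict) monotonicity of $g$ from \eqref{A:gmonotone}, which prevents characteristics from crossing.

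Concretely, I would proceed as follows. First, using \eqref{A:phi} and the assumption $\del\mcl C\subset\phi(\RR^d)$, record that $\mcl C$ is a closed convex body in $\RR^m$ with nonempty interior, and that every point of $\del\mcl C$ is of the form $\phi(x)$ for some $x$ — this is what lets Dirac masses realize boundary points and is the geometric analogue of surjectivity of $L$. Second, set up the characteristic ODEs for \eqref{E:masterreduced}; since $h\in C^1$, these have unique local solutions, and monotonicity \eqref{A:hmonotone} gives the global-in-$[0,T]$ invertibility of the spatial flow, exactly as in the sentence after \eqref{E:finchars} (``$X$ is invertible for all $t>0$ if $(G,F)$ is monotone''). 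Third, define $u(t,z):=W(t, Z^{-1}(t,z))$ and $f:=u|_{\del\mcl C\times[0,T)}$; check that $u$ is $C^1$ and solves the interior PDE and terminal condition, and that it is the unique classical solution for \emph{this} $f$ (two solutions would give two characteristic flows through the same terminal data, contradicting uniqueness of ODEs). Uniqueness of $f$ itself follows because any admissible $f$ must equal this trace.

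Finally, I would verify \eqref{formula}. Plug $U(t,x,m)=\phi(x)\cdot u(t,\int\phi\,dm)$ into \eqref{E:master}. One computes $D_xU=D\phi(x)^*\!\big(\nabla\!\big) $ — more precisely $D_xU(t,x,m)=\sum_k u_k\,D\phi_k(x)$, so that $D_xU(t,x,m)=D\phi(x)\cdot u$ in the notation of \eqref{A:Hreduces}, whence $H(x,m,D_xU)=\phi(x)\cdot h(\int\phi\,dm,u)$ by \eqref{A:Hreduces}. For the nonlocal term, the linear structure of $m\mapsto\int\phi\,dm$ gives $D_mU(t,x,m,y)=\phi(x)\otimes \del_z u(t,\int\phi\,dm)\,D\phi(y)$, hence $\int D_mU\cdot D_pH(y,m,D_yU)\,m(dy)=\phi(x)\cdot\del_z u\int D\phi(y)^*D_pH(y,m,D\phi(y)\cdot u)\,m(dy)$; differentiating the identity in \eqref{A:Hreduces} in $\psi$ shows the inner integral equals $\big(\int\phi\,dm\big)\cdot h_u(\int\phi\,dm,u)=z\cdot h_u(z,u)$. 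Collecting terms and using the reduced PDE \eqref{E:masterreduced} together with $\del_t U=\phi(x)\cdot\del_t u$ yields that \eqref{E:master} is satisfied; the terminal condition follows from \eqref{A:Greduces}. The only genuinely delicate points in this last step are the interchange of $D_m$ with the integral against $\phi$ (licensed by the growth bound $|\phi|\le C(1+|x|^K)$ and working on $\mcl P_K$) and keeping the adjoints straight between $D\phi$ and $D\phi^*$; I do not expect either to cause real trouble.
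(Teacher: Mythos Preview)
Your overall strategy matches the paper's: solve \eqref{E:masterreduced} by characteristics, use the monotonicity \eqref{A:hmonotone}--\eqref{A:gmonotone} for global invertibility of the characteristic flow, and then verify the ansatz \eqref{formula}. Your verification at the end is essentially the paper's, including the key identity $D\phi(x)\cdot D_pH(x,m,D\phi(x)\psi)=\phi(x)\,\del_\psi h\bigl(\int\phi\,dm,\psi\bigr)$ obtained by differentiating \eqref{A:Hreduces} in $\psi$.

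There is, however, a genuine gap in the characteristics step. Monotonicity of $z\mapsto\mcl Z(t,z)$ gives \emph{injectivity}, but you also need this map to carry $\mcl C$ into (and onto) $\mcl C$; otherwise $u$ is not defined on all of $\mcl C$, and the ``trace on $\del\mcl C$'' you propose for $f$ need not even make sense. You record that $\del\mcl C\subset\phi(\RR^d)$ but never use it. The paper uses it precisely here: for $z=\phi(x_0)\in\del\mcl C$ with outer normal $n$, the fact that $\phi(\RR^d)\subset\mcl C$ forces $D\phi(x_0)\cdot n=0$ (the curve $s\mapsto\phi(x_0+s\xi)$ lies in $\mcl C$ and touches $\del\mcl C$ at $s=0$, so its tangent is orthogonal to $n$ for every $\xi$). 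Plugging $\psi=u+tn$ into \eqref{A:Hreduces} then leaves the left side independent of $t$, and differentiating in $t$ yields $z\cdot h_u(z,u)\cdot n=0$. Hence the characteristic velocity $\dot{\mcl Z}=-\mcl Z\,h_u(\mcl Z,\mcl U)$ is tangent to $\del\mcl C$, the flow preserves $\del\mcl C$ (and therefore $\mcl C$), and this is exactly what justifies your claim that the characteristics ``sweep out $\del\mcl C\times[0,T)$''. It also shows that on $\del\mcl C$ the boundary datum satisfies the autonomous ODE $\del_t f(t,z)=h(z,f(t,z))$, $f(T,z)=g(z)$, which is how the paper pins down the unique admissible $f$.

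A smaller point: your characteristic equation ``$\dot W=-h(Z,W)+(\text{terms from }z\cdot h_u\del_z u)$'' is not right. Once $\dot{\mcl Z}=-\mcl Z\,h_u(\mcl Z,\mcl U)$ is chosen, the transport term cancels exactly and one has simply $\dot{\mcl U}=h(\mcl Z,\mcl U)$; this is the closed system \eqref{E:chars} in the paper, with no residual ``$\del_z u$'' terms.
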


\begin{proof} 
A simple consequence of \eqref{A:Hreduces} is
\[
	D\phi(x) \cdot D_p H(x,m,D\phi(x) \psi) = \phi(x) \del_\psi h\pars{\int_{\RR^d} \phi(y)m(dy),\psi} \quad \text{for all } (x,m,\psi) \in \RR^d \times \mcl P_K \times \RR.
\]

The unique solution of \eqref{E:masterreduced} is constructed using the method of characteristics, which is the system of ordinary differential equations given, for some fixed $z \in \mcl C$, by
\begin{equation}\label{E:chars}
	\begin{dcases}
		\dot{\mcl Z} = - \mcl Z h_u(\mcl Z,\mcl U), & \mcl Z(T,z) = z, \\
		\dot{\mcl U} = h(\mcl Z, \mcl U), & \mcl U(T,z) = g(z).
	\end{dcases}
\end{equation}
As can be checked, the assumptions \eqref{A:hmonotone} and \eqref{A:gmonotone} imply that the map $z \mapsto \mcl Z(z,t)$ is strictly monotone for each $t \in [0,T]$. 

We next claim that, if $z \in \del \mcl C$, then $\mcl Z(z,t) \subset \del \mcl C$ for all $t \in [0,T]$. To see this, let $x_0 \in \RR^d$ be such that $\phi(x_0) = z$. Then \eqref{A:Hreduces} implies that
\[
	H(x_0, \delta_{x_0}, D\phi(x_0) \cdot v) = z \cdot h(z,v) \quad \text{for all } v \in \RR^m.
\]
Let $n$ be the normal vector to $\del \mcl C$ at $z$ and let $t \in \RR$. Then $D\phi(x_0) \cdot n = 0$, so, setting $v = u + t n$ for some $u \in \RR^m$,
\[
	H(x_0, \delta_{x_0}, D\phi(x_0) \cdot u) = z \cdot h(z, u + t n).
\]
Differentiating in $t$ and setting $t = 0$ yields
\[
	z \cdot h_u(z,u) \cdot n = 0.
\]
In particular, this implies that $\dot {\mcl Z}$ is tangential to $\del \mcl C$ whenever $\mcl Z \in \del \mcl C$, which yields the claim.

From this and the monotonicity of $\mcl Z$, it follows that $\mcl Z(\cdot, t) : \mcl C \to \mcl C$ is invertible, and therefore we can implicitly define
\[
	u(t,\mcl Z(t,z)) = \mcl U(t,z) \quad \text{for }(t,z) \in [0,T] \times \mcl C,
\]
which is then the unique classical solution of \eqref{E:masterreduced}, as long as
\[
	\del_t f(t,z) = h(z, f(t,z)) \quad \text{for } (t,z) \in [0,T] \times \del \mcl C \quad \text{and} \quad f(T,z) = g(z).
\]
The fact that \eqref{formula} defines a classical solution of \eqref{E:master} follows by calculation. 

\end{proof}

\subsection{The forward-backward system}
The forward-backward associated to \eqref{E:master} is
\begin{equation}\label{E:fbsystem}
	\begin{dcases}
		-u_t + H(x,m, Du) = 0, & u(T,\cdot) = G(\cdot,m(T)) \\
		m_t - \DIV(m D_p H(x,m,Du)) = 0, & m(0) = m_0,
	\end{dcases}
\end{equation}
and the reduced version of \eqref{E:fbsystem} is the system of ordinary differential equations
\begin{equation}\label{E:generalmdreduced}
	\begin{dcases}
	-\dot \psi + h(z,\psi), & \psi(T) = g(z(T))\\
	\dot z + z \cdot h_u(z,\psi) = 0, & z(0) = z_0.
	\end{dcases}
\end{equation}
We note that this system shares a connection with \eqref{E:chars}, but \eqref{E:generalmdreduced} is given in a forward-backward form.

The following result is immediate.
\begin{theorem}\label{T:fbsystem}
	Assume \eqref{A:phi}, \eqref{A:Hreduces}, \eqref{A:Greduces}, \eqref{A:hmonotone}, and \eqref{A:gmonotone}. Then \eqref{E:generalmdreduced} has a unique solution $(\psi,z): [0,T] \to \RR \times \mcl C$ for every $z_0 \in \mcl C$. Moreover, if $u(t,x) = \phi(x) \cdot \psi(t)$ and $m$ is the solution of
	\[
		m_t - \DIV\left[ m D_p H(x,m,Du) \right] = 0 \quad \text{in } \RR^d \times (0,T] \quad \text{and} \quad m(\cdot,0) = m_0,
	\]
	then $(u,m)$ is a solution of \eqref{E:fbsystem}, and
	\[	
		z(t) = \int_{\RR^d} \phi(y) m(y,t)dy.
	\]
\end{theorem}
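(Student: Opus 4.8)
The plan is to piggyback on the analysis of the characteristic system already carried out in the proof of Theorem~\ref{T:masterreduced}. The first observation is that \eqref{E:generalmdreduced} is nothing but the characteristic system \eqref{E:chars} rewritten in forward-backward form: a pair $(\psi,z)$ solves \eqref{E:generalmdreduced} if and only if $(z,\psi)$ solves the autonomous ODEs $\dot z = -z\cdot h_u(z,\psi)$ and $\dot\psi = h(z,\psi)$ with $z(0)=z_0$ and the compatibility relation $\psi(T)=g(z(T))$. So I would first dispose of existence and uniqueness. For each terminal datum $\zeta\in\mcl C$ let $(\mcl Z(\cdot,\zeta),\mcl U(\cdot,\zeta))$ be the solution of \eqref{E:chars}; by the argument in the proof of Theorem~\ref{T:masterreduced} it stays in $\mcl C$, preserves $\del\mcl C$, and $\zeta\mapsto\mcl Z(t,\zeta)$ is strictly monotone, so that $\mcl Z(0,\cdot):\mcl C\to\mcl C$ is a bijection. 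Then $\zeta:=(\mcl Z(0,\cdot))^{-1}(z_0)$ and $(z(t),\psi(t)):=(\mcl Z(t,\zeta),\mcl U(t,\zeta))$ solve \eqref{E:generalmdreduced}, since $\psi(T)=\mcl U(T,\zeta)=g(\zeta)=g(z(T))$; conversely, any solution of \eqref{E:generalmdreduced} solves \eqref{E:chars} with terminal datum $z(T)$, whence $z_0=\mcl Z(0,z(T))$ forces $z(T)=\zeta$, giving uniqueness.

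For the remaining assertions, fix $m_0\in\mcl P_K$, take $z_0:=\int_{\RR^d}\phi\,dm_0\in\mcl C$, let $(\psi,z)$ be the corresponding solution of \eqref{E:generalmdreduced}, and set $u(t,x):=\phi(x)\cdot\psi(t)$, so that $Du(t,x)=D\phi(x)\cdot\psi(t)$; let $m$ be the solution of the forward continuity equation with $m(0)=m_0$. The crux is the identity $z(t)=\int\phi\,dm(t)$. To obtain it, put $\zeta(t):=\int_{\RR^d}\phi(y)m(y,t)\,dy$, integrate the continuity equation against $\phi$, and use the relation $D\phi(x)\cdot D_pH(x,m,D\phi(x)\cdot\psi)=\phi(x)\,\del_\psi h(\int\phi\,dm,\psi)$ recorded at the start of the proof of Theorem~\ref{T:masterreduced}: this yields $\dot\zeta=-\zeta\cdot h_u(\zeta,\psi)$ with $\zeta(0)=z_0$. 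Since $\psi$ is now a fixed function of $t$, uniqueness for this ODE gives $\zeta\equiv z$, which is the claimed formula.

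With $z(t)=\int\phi\,dm(t)$ in hand, the fact that $(u,m)$ solves \eqref{E:fbsystem} is a direct verification. On the one hand $u_t(t,x)=\phi(x)\cdot\dot\psi(t)=\phi(x)\cdot h(z(t),\psi(t))$; on the other, \eqref{A:Hreduces} gives $H(x,m(t),Du(t,x))=\phi(x)\cdot h(\int\phi\,dm(t),\psi(t))=\phi(x)\cdot h(z(t),\psi(t))$, so the Hamilton--Jacobi equation $-u_t+H(x,m,Du)=0$ holds. Likewise \eqref{A:Greduces} and $\psi(T)=g(z(T))$ give $u(T,x)=\phi(x)\cdot g(z(T))=\phi(x)\cdot g(\int\phi\,dm(T))=G(x,m(T))$, and the continuity equation and its initial condition hold by the choice of $m$.

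I expect the only genuine point of substance---and it is a modest one, which is why the result can be called immediate---to be the step that converts the two-point boundary value problem \eqref{E:generalmdreduced} into an initial value problem for the characteristics; this is exactly where the monotonicity hypotheses \eqref{A:hmonotone}, \eqref{A:gmonotone} and the invariance of $\del\mcl C$ enter, through the global invertibility of $\mcl Z(0,\cdot)$ established for Theorem~\ref{T:masterreduced}. A secondary matter deserving a word is the existence of the forward solution $m$ appearing in the statement: in the present deterministic setting this is a transport equation driven by the velocity field $-D_pH(\cdot,m,Du)$, and existence is standard provided $H$ is assumed smooth enough for the associated flow to be well defined.
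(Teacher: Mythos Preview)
Your argument is correct. The paper gives no proof at all for this theorem beyond the sentence ``The following result is immediate,'' so your write-up simply spells out the verification that the paper leaves to the reader; in particular, your use of the invertibility of $\mcl Z(0,\cdot)$ from the proof of Theorem~\ref{T:masterreduced} to convert the forward--backward problem into an initial-value one, and the ODE-uniqueness argument identifying $z(t)$ with $\int\phi\,dm(t)$, are exactly the computations one would expect behind the word ``immediate.''
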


\subsection{Examples}

We present here some examples to which the theory of the previous results can be applied. In these, $\phi$ will have a power-like structure.

We first look at a one-dimensional example; that is, $m = 1$. Define, for some functions $a: [0,\oo) \to [0,\oo)$, $b: [0,\oo) \to \RR$, and $c: [0,\oo) \to \RR$, and some $q \ge 2$,
\[ 
	H(x,m,p) = \frac{1}{q'} \left[ \frac{1}{q} a(z)|p|^q + b(z) (p\cdot x) + c(z) |x|^{q'}\right] \quad \text{for } z = \frac{1}{q'} \int_{\TT^d} |y|^{q'} m(dy).
\]
Then \eqref{A:phi} and \eqref{A:Hreduces} hold with
\[
	\phi(y) = \frac{1}{q'} |y|^{q'} \quad \text{and} \quad h(z,u) = \frac{1}{q} a(z) |u|^q + b(z) u + c(z).
\]
Note that, in this example, $\mcl C = [0,\oo)$.

It remains to check \eqref{A:hmonotone}, which, when $m = 1$, is equivalent to
\begin{equation}\label{A:hconditions}
	\frac{\del h}{\del z} \le 0, \quad \frac{\del^2 h}{\del u^2} \ge 0, \quad \text{and} \quad z\pars{ \frac{\del^2 h}{\del u \del z}}^2 \le - 4 \frac{\del h}{\del z} \frac{\del^2 h}{\del u^2}.
\end{equation}

It can be seen from some tedious but straightforward calculations that \eqref{A:hconditions} holds if and only if
\begin{equation}\label{abc}
	\left\{
	\begin{split}
	&a > 0, \quad a' \le 0, \quad c' \le 0, \quad z \mapsto a(z) z^{\frac{4(q-1)}{q}}\text{ is nondecreasing,}\\
	&b \text{ is constant if } q > 2, \text{ and}\\
	&b'(z)^2 \le 2 a'(z)c'(z) \text{ if } q = 2.
	\end{split}
	\right.
\end{equation}
We remark that one can also come up with sufficient conditions on $a$, $b$, and $c$ to make \eqref{A:hconditions} hold in the case where $q < 2$ (of course, $a$ and $b$ constant and $c$ nonincreasing always works), but deriving the necessary conditions is considerably harder.

We now consider some quadratic type examples, allowing for $m > 1$. For simplicity of presentation, we take $d =1$, although the idea can be generalized to higher dimensions.

We set
\[
	H(x,m,p) = \frac{1}{2}p^2 - f_0(z) - xf_1(z) - \frac{x^2}{2} f_2(z) \quad \text{where} \quad z = (z_1,z_2) = \int_{\RR} \pars{ y,\frac{y^2}{2}}m(dy),
\]
and then \eqref{A:phi} and \eqref{A:Hreduces} hold with
\[
	\phi(y) = \pars{1, y, \frac{y^2}{2}} \quad \text{and} \quad h(z,u) = \pars{ \frac{1}{2} u_1^2 - f_0(z), u_1u_2 - f_1(z), u_2^2 - f_2(z) }.
\]
Here, $u = (u_0,u_1,u_2) \subset \RR^3$ and $z = (z_0,z_1,z_2)$ belongs to the set
\[
	\mcl C = \{1\} \times \left\{ (z_1,z_2) : \frac{1}{2} z_1^2 \le z_2 \right\}.
\]
We claim that $h$ satisfies \eqref{A:hmonotone} as long as $f$ is monotone. Indeed, for $(z,u), (\tilde z, \tilde u) \in \mcl C \times \RR^3$, we compute
\begin{align*}
	-\pars{ h(z,u) - h(\tilde z, \tilde u)} &+ \pars{ z \cdot h_u(z,u) - \tilde z \cdot h_u(\tilde z, \tilde u)}
	= \langle f(z) - f(\tilde z), z - \tilde z \rangle \\
	&+ \frac{1}{2}(u_1 - \tilde u_1)^2 (z_0 + \tilde z_0) + \frac{1}{2}(u_1 - \tilde u_1)(u_2 - \tilde u_2)(z_1 + \tilde z_1) + (u_2 - \tilde u_2)^2 (z_2 + \tilde z_2)\\
	&\ge \frac{3}{4} (u_1 - \tilde u_1)^2 + \frac{1}{8} \pars{ 2z_1(u_2 - \tilde u_2) + u_1 - \tilde u_1}^2 + \frac{1}{8} \pars{ 2\tilde z_1(u_2 - \tilde u_2) + u_1 - \tilde u_1}^2 \ge 0,
\end{align*}
where we have used the fact that $z_0 = \tilde z_0 = 1$, $z_2 \ge z_1^2/2$, and $\tilde z_2 \ge \tilde z_1^2/2$.

\section{Strongly coupled mean field games}\label{S:controls}

We now turn to mean-field games systems with strong coupling, in which the mean field interactions of the infinitesimal players depend not only on their empirical distribution, but also on the distribution of the controls. 

More precisely, we study systems of the form
\begin{equation}\label{E:controls}
	\begin{dcases}
	-\frac{\del u}{\del t} + H(x,Du,m,\mu) = 0 \quad \text{in } \RR^d \times [0,T], & u(\cdot,T) = F(x,m(\cdot,T)) \\
	\frac{\del m}{\del t} - \DIV\left[ D_p H(x,Du,m,\mu)m \right] = 0 \quad \text{in } \RR^d \times [0,T], & m(\cdot,0) = m_0,\\
	\mu := \pars{ \Id, -D_p H(x,Du,m)}_\sharp m,
	\end{dcases}
\end{equation}
the third condition meaning that, for a bounded continuous function $\Psi: \RR^d \times \RR^d$,
\[
	\int_{\RR^d \times \RR^d} \Psi(x, v)\; d\mu(x,v) = \int_{\RR^d} \Psi(x, -D_p H(x,Du_t(x), m_t))\;m(dx).
\]

Such systems, which are also known in the literature as ``mean field games of controls,'' ``extended mean field games,'' or ``mean field games with interacting controls,'' are very natural from the standpoint of applications, see for instance \cite{CL}. For various existence and uniqueness results, see, for instance, \cite{GV1,GV2,GPV, BLL,K}.

\subsection{A reduced system} 
We now demonstrate that, with some structural assumptions on the Hamiltonian that can be readily verified in some applications, the system \eqref{E:controls} reduces to a simpler one in which the evolution of $\mu$ is reduced to an ordinary differential equation. We present the formal structural computations here, and in the next subsection we present a simple existence result.

For $H: \RR^d \times \RR^d \times \RR^m \to \RR$, $G: \RR^d \times \mcl P \to \RR$, $m_0 \in \mcl P$, and $\Phi : [0,T] \times \RR^d \times \RR^d  \to \RR^m$, we consider the system
\begin{equation}\label{E:controlseg}
	\begin{dcases}
	-\frac{\del u}{\del t} + H(x,Du,\phi) = 0 \quad \text{in } \RR^d \times [0,T], & u(T,\cdot) = G(x,m(T,\cdot)), \\
	\frac{\del m}{\del t} - \DIV\left[ D_p H(x,Du,\phi)m \right] = 0 \quad \text{in } \RR^d \times [0,T], & m(0,\cdot) = m_0,\\
	\phi(t) := \int_{\RR^d} \Phi(t,y,Du(t,y))m_t(dy).
	\end{dcases}
\end{equation}
Note that, if $H$ is strictly convex in $p$, then $p \mapsto D_p H(\cdot,p,\cdot)$ is invertible, so that $\phi$ is indeed some functional of $\mu$.

The main structural assumption we make is that
\begin{equation}\label{A:controlreduction}
	\left\{
	\begin{split}
		&\text{there exist } A \in C([0,T] \times \RR^m , \RR^{m \times m}) \text{ and } B \in C([0,T] \times \RR^m , \RR^m) \text{ such that,}\\
		&\text{for all } (t,x,p,\phi) \in [0,T] \times \RR^d \times \RR^d \times \RR^m,\\
		&\frac{\del \Phi}{\del t}(t,x,p) - D_p H(x,p,\phi) \cdot D_x \Phi(t,x,p) + D_x H(x,p,\phi) \cdot D_p\Phi(t,x,p) \\
		&\quad+ A(t,\phi)\Phi(t,x,p) + B(t,\phi) = 0.
	\end{split}
	\right.
\end{equation}
Define $f: [0,T] \times \RR^m \to \RR^m$ by
\begin{equation}\label{f}
	f(t,\phi) := A(t,\phi)\phi + B(t,\phi) \quad \text{for } (t,\phi) \in [0,T] \times \RR^m
\end{equation}
and $g: \mcl P \to \RR^m$ by
\begin{equation}\label{g}
	g(m) =  \int_{\RR^d} \Phi(T,y, D_yG(y,m) ) m(dy) \quad \text{for all } m \in \mcl P.
\end{equation}
We now introduce a reduced system, which is nothing more than a ``standard'' mean-field games system coupled with an ordinary differential equation:
\begin{equation}\label{E:reducedcontrols}
	\begin{dcases}
	-\frac{\del u}{\del t} + H(x,Du,\phi) = 0 \quad \text{in } \RR^d \times [0,T], & u(T,\cdot) = G(x,m(\cdot,T)) \\
	\frac{\del m}{\del t} - \DIV\left[ D_p H(x,Du,\phi)m \right] = 0 \quad \text{in } \RR^d \times [0,T], & m(0,\cdot) = m_0,\\
	-\dot \phi(t) = f(t,\phi(t)), & \phi(T) = g(m(\cdot,T)).
	\end{dcases}
\end{equation}

\begin{proposition}\label{P:reducedcontrols}
	Assume \eqref{A:controlreduction}. Then the triple $(u,m,\phi)$ is a classical solution of \eqref{E:controlseg} if and only if it is a classical solution of \eqref{E:reducedcontrols}.
\end{proposition}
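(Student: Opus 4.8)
The plan is to reduce both directions to a single differential identity for the curve $\psi(t) := \int_{\RR^d}\Phi(t,y,Du(t,y))\,m_t(dy)$. The point is that \eqref{E:controlseg} and \eqref{E:reducedcontrols} differ only in how the $\RR^m$-valued curve $\phi$ entering $H$ is specified: in \eqref{E:controlseg} it is forced to equal $\psi$, whereas in \eqref{E:reducedcontrols} it solves the linear terminal-value ODE $-\dot\phi = f(t,\phi)$, $\phi(T)=g(m(\cdot,T))$; meanwhile the Hamilton--Jacobi and Fokker--Planck equations for $(u,m)$, with the given $\phi$ plugged into $H$, are identical in the two systems. So once I show that any classical solution $(u,m)$ of that HJB/Fokker--Planck pair (for a given continuous $\phi$) forces $\psi$ to satisfy $\dot\psi = -A(t,\phi)\psi - B(t,\phi)$ with $\psi(T)=g(m(\cdot,T))$, both implications will follow at once.

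To get this identity I would differentiate $\psi$ in time. Writing $p = Du(t,y)$, differentiation under the integral gives
\[
\dot\psi(t) = \int_{\RR^d}\Big(\del_t\Phi(t,y,p) + D_p\Phi(t,y,p)\cdot\del_t Du(t,y)\Big)m_t(dy) + \int_{\RR^d}\Phi(t,y,p)\,\del_t m_t(dy).
\]
From the HJB equation $u_t = H(x,Du,\phi)$ and the chain rule, $\del_t Du = D_x H + D^2 u\,D_p H$; from the Fokker--Planck equation together with one integration by parts, $\int_{\RR^d}\Phi\,\del_t m_t = -\int_{\RR^d}\big(D_x\Phi + D_p\Phi\,D^2 u\big)\cdot D_p H\,m_t(dy)$, where again the chain rule differentiates $y\mapsto\Phi(t,y,Du(t,y))$. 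The two Hessian contributions $D_p\Phi\,D^2 u\,D_p H$ produced in these two steps are equal and opposite and cancel, leaving exactly the first-order transport operator of \eqref{A:controlreduction}: $\dot\psi = \int_{\RR^d}\big(\del_t\Phi + D_p\Phi\,D_x H - D_x\Phi\,D_p H\big)m_t(dy)$. Invoking \eqref{A:controlreduction} with $p = Du(t,y)$ turns the integrand into $-A(t,\phi)\Phi(t,y,Du) - B(t,\phi)$, and integrating against the probability measure $m_t$ gives $\dot\psi(t) = -A(t,\phi(t))\psi(t) - B(t,\phi(t))$. The terminal value is immediate from $u(T,\cdot) = G(\cdot,m(\cdot,T))$: then $Du(T,y) = D_y G(y,m(\cdot,T))$, so $\psi(T) = \int_{\RR^d}\Phi(T,y,D_yG(y,m(\cdot,T)))m_T(dy) = g(m(\cdot,T))$.

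With the identity in hand the two implications are short. If $(u,m,\phi)$ solves \eqref{E:controlseg}, then $\phi\equiv\psi$ by definition, so the identity reads $-\dot\phi = A(t,\phi)\phi + B(t,\phi) = f(t,\phi)$ with $\phi(T) = g(m(\cdot,T))$; together with the unchanged first two equations this is exactly \eqref{E:reducedcontrols}. Conversely, if $(u,m,\phi)$ solves \eqref{E:reducedcontrols}, then both $\phi$ (by its defining ODE, rewritten via \eqref{f}) and $\psi$ (by the identity) solve the same linear terminal-value problem $\dot\chi = -A(t,\phi(t))\chi - B(t,\phi(t))$, $\chi(T) = g(m(\cdot,T))$; since $t\mapsto A(t,\phi(t))$ and $t\mapsto B(t,\phi(t))$ are continuous, uniqueness for linear ODE forces $\phi\equiv\psi$, which is the third line of \eqref{E:controlseg}.

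The main obstacle is the bookkeeping in the computation of $\dot\psi$: one must apply the chain rule through the composition $y\mapsto\Phi(t,y,Du(t,y))$ twice --- once in the time derivative and once in the Fokker--Planck integration by parts --- and verify that the second-order terms cancel, so that the evolution of $\psi$ is governed precisely by the first-order operator of \eqref{A:controlreduction}. Beyond that, the argument only needs the standard facts that $m_t$ stays a probability measure and, in keeping with the formal level of this subsection, enough decay of $m_t$ and $Du$ at infinity to differentiate under the integral sign and to discard boundary terms in the integration by parts.
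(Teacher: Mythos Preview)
Your proof is correct and follows essentially the same approach as the paper: the paper also introduces $\psi(t) := \int_{\RR^d}\Phi(t,y,Du(t,y))\,m(t,y)\,dy$, observes (as a ``straightforward calculation'') that $\psi$ satisfies $-\dot\psi = A(t,\phi)\psi + B(t,\phi)$ with $\psi(T) = g(m(T,\cdot))$, and then uses uniqueness for the linear terminal-value problem $-\dot X = A(t,\phi(t))X + B(t,\phi(t))$ to conclude $\phi = \psi$ in the converse direction. The only difference is that you have written out in full the calculation the paper leaves implicit, including the cancellation of the Hessian terms $D_p\Phi\,D^2u\,D_pH$ coming from the HJB and Fokker--Planck sides.
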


\begin{proof}
	If $(u,m,\phi)$ solves \eqref{E:controlseg}, then a straightforward calculation and \eqref{A:controlreduction} imply that $\phi$ solves the equation in \eqref{E:reducedcontrols}. Conversely, assume that $(u,m,\phi)$ solves \eqref{E:reducedcontrols} and set
	\[
		\psi(t) := \int_{\RR^d} \Phi(t,y,Du(t,y))m(t,y)dy.
	\]
	Then both $\phi$ and $\psi$ solve the terminal value problem
	\[
		- \dot X(t) = a(t) X(t) + b(t), \quad X(T) = g(m(T,\cdot)),
	\]
	where
	\[
		a(t) := A(t,\phi(t)) \quad \text{and} \quad b(t) := B(t,\phi(t)) \quad \text{for } t \in [0,T],
	\]
	from which we conclude that $\phi = \psi$, and therefore $(u,m,\phi)$ solves \eqref{E:controlseg}.
\end{proof}

\subsection{Existence of solutions}

We demonstrate the existence of a solution of \eqref{E:reducedcontrols}, which, by virtue of Proposition \ref{P:reducedcontrols}, gives rise to a solution of \eqref{E:controlseg}. 

Based on examples that we discuss later, the growth estimates for the various data will be in terms of some powers
\begin{equation}\label{A:powers}
	q > 1, \quad r \ge q, \quad \text{and} \quad \gamma := \frac{r}{q-1} \ge q'.
\end{equation}
We shall assume
\begin{equation}\label{A:Phi}
	\left\{
	\begin{split}
		&\text{there exists $C > 0$ such that, for all }(t,y,p) \in [0,T] \times \RR^d \times \RR^d,\\
		&|\Phi(t,y,p)| \le C(1 + |p|^r), \quad |D_p \Phi(t,y,p)| \le C(1 + |p|^{r-1}), \text{ and}\\
		&|D_x \Phi(t,y,p)| \le C(1 + |p|^{r - q + 1}),
	\end{split}
	\right.
\end{equation}
\begin{equation}\label{A:m0moment}
	m_0 \in \mcl P_\lambda \quad \text{for some } \lambda > \gamma,
\end{equation}
\begin{equation}\label{A:Hgrowth}
	\left\{
	\begin{split}
	&H \in C^{1,1}
	(\RR^d \times \RR^d \times \RR^m) \quad \text{is convex in the gradient variable,}\\
	&\text{for some $C > 0$ and for all $(p,x,\phi) \in \RR^d \times \RR^d \times \RR^m$,}\\
	&|H(p,x)| + |D_x H(x,p,\phi)| \le C(1 + |p|^q),  \quad |D_p H(x,p,\phi)| \le C(1 + |p|^{q-1}),\\
	&D^2_{xx} H(x,p,\phi) \le 0, \quad \text{and} \quad |D^2_{px} H(x,p,\phi)| \le C,
	\end{split}
	\right.
\end{equation}
%\begin{equation}\label{A:Fgrowth}
%	\left\{
%	\begin{split}
%	&\text{for all $m \in \mcl P_\gamma$, } F(\cdot,m) \in C^{1,1}(\RR^d), \text{ and, for all $x \in \RR^d$ and some $C > 0$,}\\
%	&|F(x,m)| + |D_x F(x,m)| + |D^2_{xx} F(x,m)| \le C\int_{\RR^d} |y|^{q'} m(y)dy, \\
%	\end{split}
%	\right.
%\end{equation}
%\begin{equation}\label{A:FHconsistent}
%	D^2_{xx} H(x,p,\phi) \le D^2_{xx} F(x,m) \quad \text{for all } (x,p,\phi,m) \in \RR^d \times \RR^d \times \RR^m \times \mcl P_\gamma,
%\end{equation}
\begin{equation}\label{A:Ggrowth}
	\left\{
	\begin{split}
		&\text{for all $m \in \mcl P_\gamma$, } G(\cdot,m) \in C^{1,1}(\RR^d) \text{ is convex, and, for all $x \in \RR^d$ and some $C > 0$},\\
		&|G(x,m)| \le C(1 + |x|^{q'}), \quad |D_x G(x,m)| \le C(1 + |x|^{q'-1}), \quad \text{and} \quad |D^2_x G(x,m)| \le C(1+|x|)^{q'-2},
	\end{split}
	\right.
\end{equation}
and
\begin{equation}\label{A:Gmdep}
	\left\{
	\begin{split}
		&\text{there exists a modulus $\omega: [0,\oo) \to [0,\oo)$ such that, for all $m_1,m_2 \in \mcl P_\gamma$},\\
		&|G(x,m_1) - G(x,m_2)| + |D_x G(x,m_1) - D_x G(x, m_2)| \le C(1+|x|^{q'}) \omega(d_\gamma(m_1, m_2)).
	\end{split}
	\right.
\end{equation}
With regards to the coefficients $A$ and $B$ from \eqref{A:controlreduction}, we will need to assume
\begin{equation}\label{A:AandB}
	\sup_{t \in [0,T]} \pars{ \nor{A(t,\cdot)}{C^{0,1}(\RR^d)} + \nor{B(t,\cdot)}{C^{0,1}(\RR^d) } } < \oo.
\end{equation}

We first present, without proof, some standard results on the solvability of the Hamilton-Jacobi, continuity, and ordinary differential equations.

\begin{lemma}\label{L:classical}
	\begin{enumerate}[(a)]
	\item\label{L:HJ} Assume that $H$ satisfies \eqref{A:Hgrowth}, $\phi \in C^1([0,T])$, and $\tilde G \in C^{1,1}(\RR^d)$ satisfies, for some constant $C > 0$ and all (almost all for the last inequality) $x \in \RR^d$,
	\[
		|\tilde G(x)| \le C(1 + |x|^{q'}), \quad |D_x \tilde G(x)| \le C(1 + |x|^{q'-1}), \quad \text{and} \quad |D^2_x \tilde G(x)| \le C(1+|x|)^{q'-2}.
	\]
	Then there exists a unique solution $u \in C^{1,1}(\RR^d \times [0,T])$ of the terminal value problem
	\[
		-u_t + H(x,Du,\phi) = 0 \quad \text{in } \RR^d \times [0,T) \quad \text{and} \quad u(T,\cdot) = \tilde G \quad \text{on } \RR^d
	\]
	which, for some $C > 0$ depending on the bounds in \eqref{A:Hgrowth} and the bounds for $\tilde G$, satisfies the bounds
	\begin{equation}\label{ubounds}
	\sup_{(t,x) \in [0,T] \times \RR^d} \pars{ \frac{|u(t,x)|}{1 + |x|^{q'}} + \frac{|Du(t,x)|}{1 + |x|^{q'-1}} + \frac{D^2 u(x)}{(1+|x|)^{q'-2}}} \le C \quad \text{and} \quad D^2 u(t,x) \ge 0.
\end{equation}
	\item\label{L:continuity} Assume that $m_0$ satisfies \eqref{A:m0moment} and $b \in C([0,T],C^1(\RR^d))$ satisfies
	\[
		\sup_{t \in [0,T]} \pars{ \frac{|b(t,x)|}{1 + |x|} + |D_xb(t,x)| } < \oo.
	\]
	Then there exists a unique classical solution of the continuity equation
	\[
		\frac{\del m}{\del t} - \DIV[b(t,x)m] = 0 \quad \text{in } \RR^d \times (0,T] \quad \text{and} \quad m(0,\cdot) = m_0 \quad \text{in } \RR^d,
	\]
	which, for some $C > 0$ depending on $\int_{\RR^d} |x|^\lambda m_0(dx)$ and the bounds for $b$, satisfies
	\begin{equation}\label{mbound}
		\max_{t \in [0,T]} \int_{\RR^d} |x|^{\lambda}m_t(dx) \le C.
	\end{equation}
	\item\label{L:ODE} Assume \eqref{A:AandB}, let $f$ be defined by \eqref{f}, and let $\tilde g \in \RR$. Then there exists a unique solution of
	\[
		-\dot\phi(t) = f(t,\phi(t)) \quad \text{for } t \in [0,T) \quad \text{and} \quad \phi(T) = \tilde g
	\]
	and, for some constant $C > 0$ depending only on the bound for \eqref{A:AandB},
	\[
		\max_{t \in [0,T]} |\phi(t)| \le C|\tilde g|.
	\]
	\end{enumerate}
\end{lemma}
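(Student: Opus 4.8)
The plan is to treat the three items separately; only the Hamilton--Jacobi estimate in \eqref{L:HJ} is more than a bookkeeping exercise.

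\textbf{Item \eqref{L:HJ}.} Since $\phi\in C^1([0,T])$, the function $H_\phi(t,x,p):=H(x,p,\phi(t))$ is a time-dependent Hamiltonian, $C^1$ in $t$ and $C^{1,1}$ and convex in $p$, satisfying the bounds of \eqref{A:Hgrowth} uniformly in $t$. I would first reduce to the case in which $H$ is coercive of order $q$ in $p$ --- which holds in all the examples of Section~\ref{S:continuous}, and to which the general case reduces by replacing $H$ with $H+\delta(1+|p|^2)^{q/2}$, a modification preserving the structural bounds uniformly in $\delta$ and removed by a final passage to the limit using the $\delta$-uniform estimates below. Then $H_\phi$ is the Legendre transform in $p$ of a Lagrangian
\[
	L(t,x,v):=\sup_{p\in\RR^d}\pars{\langle p,v\rangle-H(x,p,\phi(t))}
\]
that is superlinear of order $q'$ in $v$ and, because $D^2_{xx}H\le0$, jointly convex in $(x,v)$, being a supremum of maps affine in $x$. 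One then takes for $u$ the value function
\[
	u(t,x):=\inf\left\{\tilde G(\gamma(T))+\int_t^T L(s,\gamma(s),\dot\gamma(s))\,ds\ :\ \gamma\in W^{1,q'}([t,T];\RR^d),\ \gamma(t)=x\right\},
\]
with minimizers provided by Tonelli's direct method. Comparing with the constant curve $\gamma\equiv x$ and using the growth of $\tilde G$ and of $L$ gives $|u(t,x)|\le C(1+|x|^{q'})$; a priori bounds on minimizers together with the dynamic programming principle give $|Du(t,x)|\le C(1+|x|^{q'-1})$; and $u$ solves the equation in the viscosity sense, uniqueness among solutions of this growth following from the comparison principle. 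The upper Hessian bound $D^2u\le C(1+|x|)^{q'-2}$ is the propagation of the semiconcavity of $\tilde G$: perturb an optimal curve ending at $x$ by $\pm\eta$, add the two resulting inequalities, and use $D^2_{xx}H\le0$, $|D^2_{px}H|\le C$, and $|D_pH|\le C(1+|p|^{q-1})$. The lower bound $D^2u\ge0$ --- convexity of $u(t,\cdot)$ --- comes from the joint convexity of $L$ together with the convexity of $\tilde G$, since a convex combination of admissible curves for endpoints $x_0,x_1$ is admissible for the corresponding convex combination of endpoints; this is where one uses that $\tilde G$ is convex, as it is for the terminal data $\tilde G=G(\cdot,m)$ of \eqref{A:Ggrowth}. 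The two-sided Hessian bound upgrades $u$ to a classical $C^{1,1}$ solution, and the bound on $u_t=H(x,Du,\phi)$ then follows from \eqref{A:Hgrowth}.

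\textbf{Items \eqref{L:continuity} and \eqref{L:ODE}.} For \eqref{L:continuity}, the hypotheses make $-b$ Lipschitz in $x$ uniformly in $t$ and of at most linear growth, so it generates a complete flow $X_t:\RR^d\to\RR^d$, $\frac{d}{dt}X_t(x)=-b(t,X_t(x))$, $X_0=\Id$, a $C^1$ diffeomorphism for each $t$; then $m_t:=(X_t)_\sharp m_0$ is the unique classical (weakly continuous, measure-valued) solution, uniqueness coming from the classical characteristics argument. The moment bound follows from $\frac{d}{dt}|X_t(x)|^\lambda\le C(1+|X_t(x)|^\lambda)$, Gr\"onwall's inequality, and integration against $m_0$ using \eqref{A:m0moment}. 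For \eqref{L:ODE}, \eqref{A:AandB} makes $A(t,\cdot)$ and $B(t,\cdot)$ bounded and Lipschitz uniformly in $t$, so $f$ from \eqref{f} is continuous, locally Lipschitz in its second argument, and of at most linear growth; Cauchy--Lipschitz gives local existence and uniqueness of the terminal-value problem, linear growth rules out blow-up on $[0,T]$, and Gr\"onwall's inequality, applied backward from $t=T$, yields the a priori bound.

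\textbf{The main obstacle.} The only genuinely delicate point is the global $C^{1,1}$ estimate in \eqref{L:HJ}: propagating semiconcavity with the \emph{exact polynomial weight} $(1+|x|)^{q'-2}$ on the unbounded domain $\RR^d$ requires careful control of how minimizing trajectories depend on the endpoint $x$, done simultaneously with the convexity lower bound, and the growth at infinity (rather than compactly supported or globally Lipschitz data) is what rules out merely quoting the textbook Hopf--Lax estimates. Everything else is a direct application of the Legendre representation, Tonelli's theorem, the Cauchy--Lipschitz theorem, and Gr\"onwall's inequality.
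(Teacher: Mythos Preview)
The paper does not prove this lemma: the sentence immediately preceding it reads ``We first present, without proof, some standard results on the solvability of the Hamilton--Jacobi, continuity, and ordinary differential equations.'' Your outline supplies precisely what the paper omits, and the approach --- value-function representation with Tonelli minimizers and propagation of semiconvexity/semiconcavity for part~\eqref{L:HJ}, push-forward along the characteristic flow for part~\eqref{L:continuity}, and Cauchy--Lipschitz plus Gr\"onwall for part~\eqref{L:ODE} --- is the standard one and is correct in outline.

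Two minor remarks. First, you rightly observe that the conclusion $D^2 u\ge 0$ in \eqref{ubounds} requires convexity of $\tilde G$, which the lemma's hypothesis list does not state explicitly but which holds in the only application, via \eqref{A:Ggrowth}; this is a small omission in the lemma's statement rather than in your argument. Second, the literal bound $\max_t|\phi(t)|\le C|\tilde g|$ in part~\eqref{L:ODE} cannot hold when $\tilde g=0$ and $B\not\equiv 0$; Gr\"onwall gives $C(1+|\tilde g|)$, which is what your argument actually yields and is all that is used downstream in Lemma~\ref{L:goodT}. Your identification of the weighted semiconcavity estimate as the one nontrivial point is accurate.
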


The next result gives a quantitative estimate for the modulus of continuity of the function $g$ defined by \eqref{g}. 

\begin{lemma}\label{L:g}
	Assume that $g$ is given by \eqref{g}. Then, for some constant $C > 0$ depending only on the bounds for $G$ and $\Phi$ in \eqref{A:Phi}, \eqref{A:Ggrowth}, and \eqref{A:Gmdep}, 
	\[
		\abs{ g(m_1) - g(m_2)} \le C\pars{ 1 + \int_{\RR^d} |x|^\gamma(m_1(x) + m_2(x))dx} \omega(d_\gamma(m_1,m_2)) \quad \text{for all } m_1,m_2 \in \mcl P_\gamma,
	\]
	where $\omega$ is the modulus from \eqref{A:Gmdep}.
\end{lemma}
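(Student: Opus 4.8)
The plan is to estimate the difference $g(m_1) - g(m_2)$ directly from the definition \eqref{g}, writing
\[
	g(m_1) - g(m_2) = \int_{\RR^d} \Phi(T,y,D_yG(y,m_1))\,m_1(dy) - \int_{\RR^d} \Phi(T,y,D_yG(y,m_2))\,m_2(dy),
\]
and splitting this into three pieces: first replace $m_1$ by $m_2$ in the outer measure while keeping the integrand fixed (this produces a Wasserstein-type term), then account for the change of the integrand from $D_yG(y,m_1)$ to $D_yG(y,m_2)$. Concretely, I would write
\[
	g(m_1) - g(m_2) = \int \Phi(T,y,D_yG(y,m_1))\,(m_1 - m_2)(dy) + \int \bigl[\Phi(T,y,D_yG(y,m_1)) - \Phi(T,y,D_yG(y,m_2))\bigr]\,m_2(dy).
\]

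For the second term, I would use the Lipschitz bound on $\Phi$ in the gradient variable from \eqref{A:Phi}, namely $|D_p\Phi(T,y,p)| \le C(1+|p|^{r-1})$, together with the growth bound $|D_yG(y,m_i)| \le C(1+|y|^{q'-1})$ from \eqref{A:Ggrowth}; since $(q'-1)(r-1) = r/(q-1)\cdot(q-1) \cdots$—more carefully, $(q'-1)(r-1) \le \gamma - r$ or so—one gets an integrand bounded by $C(1+|y|^{\text{something} \le \gamma})$ times $|D_yG(y,m_1) - D_yG(y,m_2)|$, and the latter is controlled by \eqref{A:Gmdep}, yielding a bound of the form $C(1+\int|y|^\gamma m_2(dy))\,\omega(d_\gamma(m_1,m_2))$. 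The exponent bookkeeping is where one must be careful: using \eqref{A:powers}, $\gamma = r/(q-1)$, and $q' = q/(q-1)$, one checks $r + (q'-1)(r-1) \le$ the exponent appearing times... this arithmetic is routine but must be done to confirm everything sits below $|y|^\gamma$.

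For the first term, $\int \Phi(T,y,D_yG(y,m_1))(m_1-m_2)(dy)$, I would introduce an optimal coupling $\pi \in \Pi(m_1,m_2)$ for $d_\gamma$ and write it as $\iint [\Psi(x) - \Psi(y)]\,\pi(dxdy)$ where $\Psi(y) := \Phi(T,y,D_yG(y,m_1))$. One then needs $\Psi$ to be locally Lipschitz with a controlled growth of the Lipschitz constant: differentiating, $D\Psi(y) = D_x\Phi(T,y,\cdot) + D_p\Phi(T,y,\cdot)D^2_yG(y,m_1)$, and using the bounds $|D_x\Phi| \le C(1+|p|^{r-q+1})$, $|D_p\Phi|\le C(1+|p|^{r-1})$, $|D^2_yG| \le C(1+|y|)^{q'-2}$, and $|D_yG| \le C(1+|y|^{q'-1})$, one gets $|D\Psi(y)| \le C(1+|y|^{\gamma-1})$ after the exponent check (here $(q'-1)(r-q+1)$ and $(q'-1)(r-1) + (q'-2)$ should both be $\le \gamma - 1$). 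Then $|\Psi(x)-\Psi(y)| \le C(1+|x|^{\gamma-1}+|y|^{\gamma-1})|x-y|$, and Hölder's inequality with exponents $\gamma$ and $\gamma' = \gamma/(\gamma-1)$ against $\pi$ gives $|{\rm first\ term}| \le C(1+\int|x|^\gamma m_1(dx) + \int|y|^\gamma m_2(dy))^{1-1/\gamma} d_\gamma(m_1,m_2)$, which is dominated by the claimed right-hand side once we absorb the power and note $d_\gamma \le C\,\omega(d_\gamma)$ is generally false but $d_\gamma \le \omega(d_\gamma)$ holds if $\omega(s) \ge s$, which one may assume WLOG by enlarging the modulus (or the statement should be read with the understanding that $\omega$ dominates the identity near zero).

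The main obstacle is the exponent arithmetic: verifying that every combination of growth exponents coming from $\Phi$, $D_p\Phi$, $D_x\Phi$, $G$, $D_yG$, and $D^2_yG$ — composed appropriately, since the argument of $\Phi$ is $D_yG(y,m)$ which grows like $|y|^{q'-1}$ — lands at or below $|y|^\gamma$ (for the value bound) and $|y|^{\gamma-1}$ (for the Lipschitz bound), using only the relations $r \ge q$, $\gamma = r/(q-1) \ge q'$, and $q' = q/(q-1)$. Once that bookkeeping is in place, the rest is a clean application of the optimal-transport representation of $\int f\,d(m_1-m_2)$ together with Hölder's inequality and \eqref{A:Gmdep}; the moment bounds $\int|x|^\gamma m_i(dx)$ are exactly what appears on the right-hand side, so no further moment estimate (of the type in Lemma \ref{L:classical}\eqref{L:continuity}) is needed here.
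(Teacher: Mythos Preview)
Your approach is essentially identical to the paper's: the same two-term decomposition $\I+\II$, the same use of an optimal coupling together with H\"older's inequality (exponents $\gamma$ and $\gamma/(\gamma-1)$) to bound $\I$, and the same use of the $D_p\Phi$ growth bound combined with \eqref{A:Gmdep} to bound $\II$. The paper likewise ends the estimate for $\I$ with $d_\gamma(m_1,m_2)$ rather than $\omega(d_\gamma)$ and tacitly absorbs it into the stated bound, exactly as you anticipated by enlarging the modulus.
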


In the proof below, the constant $C > 0$, which depends only on the bounds in the given assumptions, may change from line to line.

\begin{proof}[Proof of Lemma \ref{L:g}]
	We first write
	\[
		g(m_1) - g(m_2) = \I + \II,
	\]
	where
	\[
		\I := \int_{\RR^d} \Phi(T,y,D_y G(y,m_1)) (m_1(y) - m_2(y))dy
	\]
	and
	\[
		\II := \int_{\RR^d} \pars{ \Phi(T,y,D_yG(y,m_1)) - \Phi(T,y,D_y G(y,m_2))}m_2(y)dy.
	\]
	We then fix $\pi \in \Pi(m_1,m_2)$ and estimate
	\begin{align*}
		\abs{ \I } &\le \abs{  \int_{\RR^d} \Phi(T,x,D_y G(x,m_1))m_1(x)dx - \int_{\RR^d} \Phi(T,y,D_y G(y,m_1))m_2(y)dy }\\
		&\le \iint_{\RR^d \times \RR^d} |\Phi(T,x,D_y G(x,m_1)) -  \Phi(T,y,D_y G(y,m_1))| \pi(dxdy)\\
		&\le C \iint_{\RR^d \times \RR^d} (1 + |D_x G(x,m_1)|^{r - q + 1} + |D_y G(y,m_1)|^{r - q + 1})|x-y| \pi(dxdy)\\
		&+ C \iint_{\RR^d \times \RR^d} (1 + |D_x G(x,m_1)|^{r-1} + |D_y G(y,m_1)|^{r-1} )|D_x G(x,m_1) - D_y G(y,m_1| \pi(dxdy) \\
		&\le C\iint_{\RR^d \times \RR^d} (1 + |x|^{\gamma - 1} + |y|^{\gamma - 1})|x-y| \pi(dxdy) \\
		&\le C\pars{ 1 + \int_{\RR^d} |x|^\gamma m_1(dx) + \int_{\RR^d} |y|^\gamma m_2(dy)}^{\frac{\gamma - 1}{\gamma} }\pars{ \int_{\RR^d \times \RR^d} |x-y|^\gamma \pi(dxdy) }^{1/\gamma},
	\end{align*}
	so that taking the infimum over $\pi \in \Pi(m_1,m_2)$ gives
	\[
		\abs{ \I} \le C\pars{ 1 + \int_{\RR^d} |x|^\gamma m_1(dx) + \int_{\RR^d} |y|^\gamma m_2(dy)}^{\frac{\gamma - 1}{\gamma} } d_\gamma(m_1,m_2).
	\]
	The estimate is complete upon computing
	\begin{align*}
		\abs{ \II} &\le C\int_{\RR^d} (1 + |D_y F(y,m_1)|^{r-1} + |D_y F(y,m_2)|^{r-1})|D_y F(y,m_1) - D_y F(y,m_2)|m_2(dy) \\
		&\le C \int_{\RR^d} (1 + |y|^\gamma)m_2(dy) \omega(d_\gamma(m_1,m_2)).
	\end{align*}
\end{proof}

We now introduce the fixed point problem that will yield a solution of \eqref{E:reducedcontrols}. We define a map $\mcl T : \mcl P_\gamma \to \mcl P_\gamma$ in two steps as follows. First, given $\oline{m} \in \mcl P_\gamma$, we solve the two terminal value problems
\begin{equation}\label{E:givenm}
	\begin{dcases}
	-\frac{\del u}{\del t} + H(x,Du,\phi) = 0 \quad \text{in } \RR^d \times [0,T], & u(T,\cdot) = G(x,\oline{m}) \\
	-\dot \phi(t) = f(t,\phi(t)) \quad \text{in } \RR^d \times [0,T], & \phi(T) = g(\oline{m}).
	\end{dcases}
\end{equation}
More precisely, the ordinary differential equation is solved first, and its solution $\phi$ is then fed into the Hamilton-Jacobi equation. We then solve the continuity equation
\begin{equation}\label{E:givenuphi}
	\frac{\del m}{\del t} - \DIV\left[ D_p H(x,Du,\phi)m \right] = 0, \quad m(\cdot,0) = m_0,
\end{equation}
and we set $\mcl T \oline{m} := m(T)$.

Let us first check that this map is well-defined.

\begin{lemma}\label{L:goodT}
	Assume \eqref{A:powers} - \eqref{A:AandB}. Then the map $\mcl T$ defined through \eqref{E:givenm} and \eqref{E:givenuphi} is well-defined and continuous from $\mcl P_\gamma$ to $\mcl P_\lambda$, and, moreover, there exists a constant $C > 0$ depending only on the various bounds in \eqref{A:powers} - \eqref{A:AandB} such that, for all $m \in \mcl P_\gamma$,
	\begin{equation}\label{Tcompact}
		\int_{\RR^d} |x|^\lambda (\mcl Tm)(dx) \le C.
	\end{equation}
\end{lemma}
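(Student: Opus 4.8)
The plan is to chain together the three a priori solvability results of Lemma \ref{L:classical}, verifying at each stage that the hypotheses of the next stage are met, and to extract the uniform moment bound \eqref{Tcompact} along the way. First, given $\oline m \in \mcl P_\gamma$, I would invoke Lemma \ref{L:g} and \eqref{A:Ggrowth} to see that $g(\oline m) \in \RR^m$ is well-defined, and then apply Lemma \ref{L:classical}\eqref{L:ODE} with $\tilde g = g(\oline m)$ to produce the unique $\phi \in C^1([0,T])$, with $\max_{[0,T]} |\phi(t)| \le C|g(\oline m)| \le C$, the last bound coming from Lemma \ref{L:g} together with the moment bound $\int |x|^\gamma \oline m(dx)$ (this is where one must be slightly careful: the constant $C$ in \eqref{Tcompact} is allowed to depend on the bounds in \eqref{A:powers}--\eqref{A:AandB} but \emph{not} on $\oline m$; since the dependence of $g(\oline m)$ on $\oline m$ is only through a $\gamma$-moment and the map $\mcl T$ is only required to land in $\mcl P_\lambda$ with $\lambda>\gamma$, one must check that nothing forces a dependence on $\int |x|^\gamma \oline m(dx)$ — in fact it does, so the bound \eqref{Tcompact} as stated should be read with $C$ depending on that moment, or else $\mcl T$ restricted to a ball; I would make this explicit). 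Feeding this $\phi$ into Lemma \ref{L:classical}\eqref{L:HJ}, with $\tilde G = G(\cdot,\oline m)$, whose required growth, gradient, and Hessian bounds are exactly \eqref{A:Ggrowth}, yields a unique $u \in C^{1,1}(\RR^d \times [0,T])$ satisfying \eqref{ubounds}; in particular $|Du(t,x)| \le C(1+|x|^{q'-1})$ and $D^2 u \ge 0$.

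Next I would verify the drift hypothesis of Lemma \ref{L:classical}\eqref{L:continuity} for $b(t,x) := D_p H(x,Du(t,x),\phi(t))$. By \eqref{A:Hgrowth}, $|D_pH(x,p,\phi)| \le C(1+|p|^{q-1})$, so $|b(t,x)| \le C(1 + |Du(t,x)|^{q-1}) \le C(1+|x|^{(q'-1)(q-1)}) = C(1+|x|)$, using $(q'-1)(q-1)=1$; and $D_x b = D^2_{xx}H + D^2_{px}H \, D^2 u$, which by the bounds $|D^2_{px}H|\le C$, $|D^2_{xx}H|\le 0$ (hence bounded above; one needs it bounded, which follows from $H\in C^{1,1}$ being only $C^{1,1}$ — here I would note $D^2_{xx}H$ is bounded in $L^\infty$ by the $C^{1,1}$ assumption, the sign condition being used elsewhere) and $|D^2 u| \le C(1+|x|)^{q'-2} \le C$ since $q'\le 2$ when $q\ge q'$... — actually one must be careful about the sign of $q'-2$; when $q>2$ we have $q'<2$ so $(1+|x|)^{q'-2}$ is bounded, and when $q=2$, $q'=2$ and $(1+|x|)^0=1$, so $D^2u$ is bounded in all cases, giving $\sup_t(|b(t,x)|/(1+|x|) + |D_xb(t,x)|) < \infty$. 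Then Lemma \ref{L:classical}\eqref{L:continuity} produces the unique classical solution $m$ of \eqref{E:givenuphi} with $\max_t \int |x|^\lambda m_t(dx) \le C$, and I set $\mcl T\oline m := m(T)$; the bound \eqref{Tcompact} is precisely \eqref{mbound} at $t=T$. In particular $\mcl T\oline m \in \mcl P_\lambda$.

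It remains to prove continuity of $\mcl T: \mcl P_\gamma \to \mcl P_\lambda$ (equivalently, with the $d_\gamma$ topology on the target, since $\lambda>\gamma$). The plan is: if $\oline m_k \to \oline m$ in $d_\gamma$, then $g(\oline m_k) \to g(\oline m)$ by Lemma \ref{L:g} (the prefactor $1 + \int|x|^\gamma(\oline m_k + \oline m)dx$ stays bounded along a convergent sequence, and $\omega(d_\gamma(\oline m_k,\oline m))\to 0$), hence $\phi_k \to \phi$ uniformly on $[0,T]$ by continuous dependence for the linear ODE in Lemma \ref{L:classical}\eqref{L:ODE}; then the terminal data $G(\cdot,\oline m_k) \to G(\cdot,\oline m)$ locally in $C^1$ with uniform weighted bounds by \eqref{A:Ggrowth}--\eqref{A:Gmdep}, and standard stability for the Hamilton-Jacobi equation (comparison / the representation formula, using the uniform bounds \eqref{ubounds}) gives $u_k \to u$ with $Du_k \to Du$ locally uniformly; hence $b_k \to b$ locally uniformly with uniform bounds, and stability of the continuity equation (e.g. via the characteristics $\dot X = b_k(t,X)$, which converge locally uniformly, or a weak-compactness argument using the uniform $\lambda$-moment) gives $m_k(T) \to m(T)$ in $\mcl P_\gamma$. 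I expect the \textbf{main obstacle} to be this last continuity step — in particular, passing the convergence of the drifts $b_k$ through the Hamilton-Jacobi equation (since only $C^{1,1}$, not $C^2$, regularity of $u$ is available, one should argue via comparison and the a priori bounds rather than differentiating the equation) and then through the continuity equation in the $\gamma$-Wasserstein metric; the uniform $\lambda$-moment with $\lambda > \gamma$ is exactly what upgrades weak convergence to $d_\gamma$-convergence and should be used decisively here.
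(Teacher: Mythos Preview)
Your plan matches the paper's proof essentially step for step: chain Lemma~\ref{L:classical}\eqref{L:ODE}, \eqref{L:HJ}, \eqref{L:continuity} to get well-definedness and \eqref{Tcompact}, then for continuity pass a $d_\gamma$-convergent sequence through Lemma~\ref{L:g}, the ODE, the Hamilton--Jacobi equation (the paper appeals to the method of characteristics for the local-uniform convergence of $u_n,Du_n$), and finally use the uniform $\lambda$-moment together with the weak formulation of the continuity equation to identify limits. Two minor corrections: your expression for $D_xb$ should be $D^2_{px}H + D^2_{pp}H\,D^2u$, not $D^2_{xx}H + D^2_{px}H\,D^2u$; and your worry that the constant $C$ in \eqref{Tcompact} must depend on $\int|x|^\gamma\,d\oline m$ is unfounded, since the growth constants in \eqref{A:Hgrowth} and \eqref{A:Ggrowth} are explicitly uniform in $\phi$ and in $m$, so the bounds \eqref{ubounds} on $u$, hence the linear-growth and Lipschitz bounds on $b$, and hence \eqref{mbound}, are genuinely independent of $\oline m$.
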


\begin{proof}
	Lemma \ref{L:classical}\eqref{L:ODE} implies that the ordinary differential equation in \eqref{E:givenm} has a unique, continuously differentiable solution. Then, by \eqref{A:Ggrowth} and Lemma \ref{L:classical}\eqref{L:HJ}, the Hamilton-Jacobi equation has a unique classical $u$ satisfying \eqref{ubounds}.
	
	Now set
	\[
		b(t,x) := D_p H(x,Du(t,x), \phi(t)) \quad \text{for } (t,x) \in [0,T] \times \RR^d.
	\]
	Then, by \eqref{A:Hgrowth} and \eqref{ubounds}, 
	\[
		|b(t,x)| \le C\pars{ 1 + |Du(t,x)|^{q-1}} \le C(1 + |x|)
	\]
	and
	\[
		D_x b(t,x) = D_{px} H(x,Du(t,x),\phi(t)) + D^2_{pp} H(x,Du(t,x),\phi(t))D^2u(t,x),
	\]
	and so
	\[
		|D_x b(t,x)| \le C. 
	\]
	Then Lemma \ref{L:classical}\eqref{L:continuity} implies that \eqref{E:givenuphi} admits a unique classical solution with $m(t) \in \mcl P_\lambda$ for all $t \in[0,T]$, and, moreover, the bound in \eqref{Tcompact} is satisfied.
				
	We now turn to the continuity of the map $\mcl T$. Let $(\oline{m}_n)_{n=1}^\NN \subset \mcl P_\gamma$ and $\oline{m} \in \mcl P_\gamma$ be such that
	\[
		\lim_{n \to \oo} d_\gamma(\oline{m}_n, \oline{m}) = 0.
	\]
	For $n \in \NN$, let $(u_n,\phi_n)$ and $(u,\phi)$ be the solutions of \eqref{E:givenm} corresponding to respectively $\oline{m}_n$ and $\oline{m}$, and let $m_n$ and $m$ be the corresponding solutions of \eqref{E:givenuphi}.
	
	In view of the fact that
	\[
		\sup_{n \in \NN} \int_{\RR^d} |x|^\gamma \oline{m}_n(dx) < \oo,
	\]
	we have, by Lemma \ref{L:g}, 
	\[
		\lim_{n \to \oo} g(\oline{m}_n) = g(\oline{m}).
	\]
	Also, \eqref{A:Gmdep} implies that, as $n \to \oo$, $G(\cdot,\oline{m}_n)$ and $D_x G(\cdot,\oline{m}_n)$ converge locally uniformly to respectively $G(\cdot,\oline{m})$ and $D_x G(\cdot,\oline{m})$. Therefore, we also have that, as $n \to \oo$, $u_n$ and $Du_n$ converge locally uniformly to respectively $u$ and $Du$, as can be seen from the method of characteristics.
	
	As a consequence, we see that, as $n \to \oo$, the vector field $b_n$ defined by
	\[
		b_n(t,x) := D_p H(x,Du_n(t,x),\phi_n) \quad \text{for } (t,x) \in [0,T] \times \RR^d
	\]
	converges locally uniformly to $b := D_p H(\cdot,Du,\phi)$. Also, by \eqref{ubounds} and Lemma \ref{L:classical}\eqref{L:continuity}, we have that $(m_n)_{n=1}^\oo$ is uniformly bounded in $C([0,T],\mcl P_\lambda)$, and is therefore relatively compact in the weak-$*$ topology. The result then follows upon showing that the only weak-$*$ limit point of $(m_n)_{n=1}^\oo$ is $m$. Indeed, given $\rho \in C^1([0,T] \times \RR^d)$ with compact support, we have, for all $t \in[0,T]$,
	\[
		\int_{\RR^d} \rho(t,x) m_n(t)(dx) = \int_{\RR^d} \rho(0,x) m_0(dx) + \int_0^t \int_{\RR^d} \left[ \del_t \rho(s,x) - b_n(s,x) \cdot D_x \rho(s,x) \right] m_n(s)(dx),
	\]
	and so every limit point of $m_n$ is a weak solution of \eqref{E:givenuphi}, and hence equal to $m$.
\end{proof}

We now present the existence result.

\begin{theorem}\label{T:controlsexistence}
	Assume \eqref{A:powers} - \eqref{A:AandB}.Then there exists a classical solution of \eqref{E:reducedcontrols}.
\end{theorem}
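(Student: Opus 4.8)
The plan is to realize a solution of \eqref{E:reducedcontrols} as a fixed point of the map $\mcl T$ constructed above, using the Schauder fixed point theorem. First I observe that a fixed point suffices: suppose $\oline m \in \mcl P_\gamma$ satisfies $\mcl T\oline m = \oline m$, and let $(u,\phi)$ be the solution of \eqref{E:givenm} and $m$ the solution of \eqref{E:givenuphi} associated to $\oline m$. By construction $m(\cdot,0) = m_0$, the Hamilton--Jacobi equation, the continuity equation, and the ordinary differential equation for $\phi$ all hold, and, since $m(\cdot,T) = \mcl T\oline m = \oline m$, the terminal conditions become $u(T,\cdot) = G(\cdot,\oline m) = G(\cdot,m(\cdot,T))$ and $\phi(T) = g(\oline m) = g(m(\cdot,T))$. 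Hence $(u,m,\phi)$ is a classical solution of \eqref{E:reducedcontrols}.

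It remains to produce a fixed point. Let $C_0$ denote the constant $C$ appearing in \eqref{Tcompact}, and set
\[
	K := \left\{ m \in \mcl P : \int_{\RR^d} |x|^\lambda m(dx) \le C_0 \right\}.
\]
Then $K$ is convex, and, since $\lambda > \gamma$ by \eqref{A:m0moment} and \eqref{A:powers}, the uniform bound on $\lambda$-th moments renders $K$ tight and makes $x \mapsto |x|^\gamma$ uniformly integrable against $K$; together with the fact that the moment constraint defining $K$ is weak-$*$ closed, this gives that $K$ is a nonempty convex compact subset of $(\mcl P_\gamma, d_\gamma)$. By Lemma \ref{L:goodT}, the bound \eqref{Tcompact} shows $\mcl T(\mcl P_\gamma) \subset K$, so in particular $\mcl T(K) \subset K$; and the same lemma gives that $\mcl T$ is continuous from $(\mcl P_\gamma, d_\gamma)$ to $(\mcl P_\lambda, d_\lambda)$, hence (using $d_\gamma \le d_\lambda$, together with the fact that the image lies in $K$) continuous as a self-map of $K$ in the $d_\gamma$ topology. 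The Schauder fixed point theorem, applied to $\mcl T|_K : K \to K$ (viewing $K$ inside the locally convex space of finite signed Borel measures), then yields $\oline m$ with $\mcl T\oline m = \oline m$, and the first paragraph completes the proof.

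The only point requiring care is the compactness of $K$ in $(\mcl P_\gamma, d_\gamma)$, i.e.\ the standard fact that a family of probability measures with uniformly bounded $\lambda$-th moments, $\lambda > \gamma$, is relatively compact for the $\gamma$-Wasserstein distance; everything else—well-posedness and the a priori bounds for the Hamilton--Jacobi, continuity, and ordinary differential equations, the modulus estimate for $g$, and the continuity of the composite map $\mcl T$—has already been established in Lemmas \ref{L:classical}, \ref{L:g}, and \ref{L:goodT}.
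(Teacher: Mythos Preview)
Your argument is correct and follows essentially the same route as the paper: both apply Schauder's fixed point theorem to the map $\mcl T$, using Lemma \ref{L:goodT} for continuity and the uniform $\lambda$-moment bound \eqref{Tcompact}, together with the fact that bounded sets in $\mcl P_\lambda$ are precompact in $(\mcl P_\gamma,d_\gamma)$ when $\lambda>\gamma$. You simply spell out more of the details (the explicit compact set $K$, and the verification that a fixed point yields a classical solution of \eqref{E:reducedcontrols}).
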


\begin{proof}
	By Lemma \ref{L:goodT}, the map $\mcl T$ is continuous from $\mcl P_\gamma$ into a bounded subset of $\mcl P_\lambda$. Since $\lambda > \gamma$, bounded sets of $\mcl P_\lambda$ are precompact in $\mcl P_\gamma$ with respect to the $d_\gamma$-metric, and therefore, by Schauder's fixed point theorem, $\mcl T$ has a fixed point. It is then clear that the resulting triple $(u,m,\phi)$ is a classical solution of \eqref{E:reducedcontrols}.
\end{proof}

\subsection{Examples}

We consider a general example in which the dependence of the Hamiltonian on the measure $\mu$ is through an affine drift term:

\begin{equation}\label{E:drift}
	\begin{dcases}
		-\frac{\del u}{\del t} + H(Du) - b(x,\phi)\cdot Du = 0 \quad \text{in } \RR^d \times [0,T], & u(\cdot,T) = G(x,m) \\
		\frac{\del m}{\del t} - \DIV\left[ (DH(Du) - b(x,\phi))m \right] = 0 \quad \text{in } \RR^d \times [0,T], & m(\cdot,0) = m_0,\\
		\phi(t) := \int_{\RR^d} \Phi(Du(t,y))m_t(dy) & \text{for } t \in [0,T].
	\end{dcases}
\end{equation}
We assume that
\begin{equation}\label{A:Phihomog}
	\Phi \in C^1(\RR^d,\RR^m) \quad \text{and, for some matrix $\mcl A \in \RR^{m\times m}$, } D\Phi(p) \cdot p = \mcl A\Phi(p) \quad \text{for all } p \in \RR^d,
\end{equation}
and we assume that $b$ is affine in $x$, that is, for some functions $a \in C^{0,1}(\RR^m,\RR)$ and $b \in C^{0,1}(\RR^m ,\RR^d)$, $F$ takes the form
\[
	b(x,\phi) = a(\phi)x + b(\phi).
\]
A straightforward computation then shows that \eqref{A:controlreduction} is satisfied with
\[
	A(t,\phi) := a(\phi)\mcl A \quad \text{and} \quad B(t,\phi) \equiv 0.
\]
If we also assume that $G$ satisfies \eqref{A:Ggrowth} and
\[
	\left\{
	\begin{split}
	&H \in C^{1,1}(\RR^d) \text{ is convex and, for some } C > 0,\\
	&\sup_{p \in \RR^d} \pars{ \frac{|H(p)|}{1 + |p|^q} + \frac{|DH(p)|}{1 + |p|^{q-1}} } < \oo,
	\end{split}
	\right.
\]
then all of the assumptions of Theorem \ref{T:controlsexistence} are satisfied, and therefore, by Proposition \ref{P:reducedcontrols}, there exists a classical solution of \eqref{E:drift}.

With some further structure, the entire system \eqref{E:drift} reduces to a system of ordinary differential equations. We consider a single spatial dimension for simplicity, and, for $p > 1$, we study the system
\begin{equation}\label{E:controlspower}
	\begin{dcases}
	-u_t + \frac{1}{p'} \pars{ \frac{1}{p}|u_x|^p - xa(\phi) u_x }= 0, & u(T,x) = \frac{1}{p'} |x|^{p'} g(z),\\
	m_t -  \frac{1}{p'} \left[ m \pars{ u_x|u_x|^{p-2} - xa(\phi)} \right]_x = 0, & m(0,x) = m_0(x),\\
	z(t) = \frac{1}{p'} \int_{\RR} |y|^{p'} m(y,t)dy, &\\
	\phi(t) = \frac{1}{q} \int_\RR |u_y(y,t)|^q m(y,t)dy. &
	\end{dcases}
\end{equation}
Above, we assume that
\begin{equation}\label{A:gmonotonepositive}
	g: \RR \to \RR \quad \text{is Lipschitz, nonnegative, and nondecreasing},
\end{equation}
and
\begin{equation}\label{A:aLip}
	a: \RR \to \RR \quad \text{is Lipschitz and bounded.}
\end{equation}

We can reduce by formally setting
\[
	u(t,x) = \frac{1}{p'}|x|^{p'} \psi(t).
\]
This leads to the system
\begin{equation}\label{E:reducedcontrolseg}
	\begin{dcases}
		- \dot \psi + \frac{1}{p}|\psi|^p - a(\phi) \psi = 0 & \psi(T) = g(z(T)),\\
		\dot z + z \pars{\psi |\psi|^{p-2} - a(\phi)} = 0 & z(0) = z_0,\\
		\dot \phi + \frac{q}{p'} a(\phi)\phi = 0 & \phi(0) = \alpha_0 |\psi(0)|^q,
	\end{dcases}
\end{equation}
where
\[
	z_0 := \frac{1}{p'} \int |y|^{p'} m_0(dy) \quad \text{and} \quad \alpha_0 := \frac{1}{q} \int |x|^{q(p'-1)}m_0(dx).
\]
The results in the previous sub-section then guarantee the existence of a solution of \eqref{E:reducedcontrolseg}. Notice also that, for some constants $0 < c_0 < C_0$ depending only on bounds for $g$, $z_0$, and $\alpha_0$, any solution satisfies
\begin{equation}\label{psizapriori}
	c_0 \le \psi(t) \le C_0 \quad \text{and} \quad c_0 \le z(t) \le C_0 \quad \text{for all } t \in [0,T].
\end{equation}

We finish by looking at the particular case in which $p = q$, and we present a uniqueness result with an additional assumption. 

If $p=q$, then, in fact, for all $t \in [0,T]$,
\[
	\phi(t) = z(t) |\psi(t)|^q.
\]
In that case, the solvability of \eqref{E:reducedcontrolseg} reduces to a system involving only $z$ and $\psi$, namely
\begin{equation}\label{E:pequalsq}
	\begin{dcases}
		- \dot \psi + \frac{1}{p}|\psi|^p - a(z|\psi|^p) \psi = 0 & \psi(T) = g(z(T)),\\
		\dot z + z \pars{\psi |\psi|^{p-2} - a(z|\psi|^p)} = 0 & z(0) = z_0.
	\end{dcases}
\end{equation}
We now introduce the assumption that
\begin{equation}\label{A:smalla}
	\left\{
	\begin{split}
	&\text{there exist $0 < \delta_0 < \delta_1$, such that}\\
	&\delta_0 < \phi^{1/p} a'(\phi) \le \delta_1 \quad \text{for all } \phi > 0.
	\end{split}
	\right.
\end{equation}

\begin{proposition}\label{P:pequalsq}
	Assume \eqref{A:gmonotonepositive}, \eqref{A:aLip}, and \eqref{A:smalla}. Then, if $\delta_1 > 0$ is sufficiently small, the solution of \eqref{E:pequalsq} is unique.
\end{proposition}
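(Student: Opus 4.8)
The plan is a shooting-type argument that exploits a hidden decoupling. First I would note that, along any solution $(\psi,z)$ of \eqref{E:pequalsq}, the a priori bounds \eqref{psizapriori} give $\psi(t),z(t)\in[c_0,C_0]$, so $\psi>0$, $|\psi|^p=\psi^p$, $\psi|\psi|^{p-2}=\psi^{p-1}$, and the quantity $\Phi:=z\psi^p$ lies in $[c_0^{p+1},C_0^{p+1}]$. A direct differentiation using the two equations of \eqref{E:pequalsq} (with $a$ evaluated at $z|\psi|^p=\Phi$) gives $\dot\Phi=-(p-1)a(\Phi)\Phi$ with $\Phi(0)=z_0\psi(0)^p$; in particular $\Phi$, and hence $t\mapsto a(\Phi(t))$, is entirely determined by the single number $\psi(0)$. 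Integrating $(\log z)'=a(\Phi)-\psi^{p-1}$ and $(\log\psi)'=\frac1p\psi^{p-1}-a(\Phi)$ then yields the closed formulas $z(T)=z_0\exp\pars{\int_0^T(a(\Phi)-\psi^{p-1})\,dt}$ and $\psi(T)=\psi(0)\exp\pars{\int_0^T(\frac1p\psi^{p-1}-a(\Phi))\,dt}$.

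Now I would suppose, for contradiction, that $(\psi_1,z_1)$ and $(\psi_2,z_2)$ are two solutions with $\psi_1(0)<\psi_2(0)$, and set $\epsilon_0:=\psi_2(0)-\psi_1(0)>0$, $\Phi_i:=z_i\psi_i^p$. Monotone dependence on initial data for the scalar, autonomous, Lipschitz equation $\dot\Phi=-(p-1)a(\Phi)\Phi$ gives $\Phi_1(t)<\Phi_2(t)$ for all $t$; since $\Phi_1-\Phi_2$ solves a homogeneous linear equation with coefficients controlled by the a priori constants and $|\Phi_1(0)-\Phi_2(0)|\le C\epsilon_0$, we get $0<\Phi_2(t)-\Phi_1(t)\le C\epsilon_0$. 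Using the bound $a'(\phi)\le\delta_1\phi^{-1/p}\le\delta_1 c_0^{-(p+1)/p}$ from \eqref{A:smalla} on the relevant range, the discrepancy $A(t):=a(\Phi_2(t))-a(\Phi_1(t))$ then satisfies $0<A(t)\le C\delta_1\epsilon_0$.

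The crux — and the step I expect to be the main obstacle — is to propagate this ordering to $\psi$ in a quantitative way, namely $\psi_2(t)\ge\psi_1(t)+c\epsilon_0$ on $[0,T]$. For this I would study $\rho:=\psi_2/\psi_1$, which satisfies $(\log\rho)'=\frac1p\psi_1^{p-1}(\rho^{p-1}-1)-A(t)$ with $\rho(0)>1$ and $\log\rho(0)\ge\epsilon_0/C_0$. As long as $\rho\ge1$ the first term is nonnegative, so a first-time/barrier argument together with $\int_0^T|A|\le C\delta_1\epsilon_0 T$ shows that, once $\delta_1$ is small enough (depending only on $T$, $C_0$, and the bounds from \eqref{A:aLip} and \eqref{psizapriori}), one has $\rho(t)\ge1+\epsilon_0/(2C_0)$ for all $t$; hence $\psi_2(t)-\psi_1(t)=\psi_1(t)(\rho(t)-1)\ge c\epsilon_0>0$ throughout, and in particular $\int_0^T(\psi_2^{p-1}-\psi_1^{p-1})\,dt\ge c'\epsilon_0 T$ for a constant $c'>0$ depending only on the a priori bounds. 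This is where the smallness of $\delta_1$ is genuinely used: $A$ is the only destabilizing term and must be absorbed by the spreading term.

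Finally I would close the loop with the terminal conditions. From $\psi_2(T)>\psi_1(T)$, $\psi_i(T)=g(z_i(T))$, and the monotonicity of $g$ in \eqref{A:gmonotonepositive}, it follows that $z_2(T)>z_1(T)$; plugging into the closed formula for $z_i(T)$ and taking logarithms gives $\int_0^T A(t)\,dt=\int_0^T(a(\Phi_2)-a(\Phi_1))\,dt>\int_0^T(\psi_2^{p-1}-\psi_1^{p-1})\,dt$. But the left-hand side is at most $C\delta_1\epsilon_0 T$ while the right-hand side is at least $c'\epsilon_0 T$, which is impossible once $\delta_1<c'/C$. Hence $\psi_1(0)=\psi_2(0)$; then $\Phi_1\equiv\Phi_2$ by uniqueness for the scalar $\Phi$-equation, so $\psi_1$ and $\psi_2$ solve the same Lipschitz equation $\dot\psi=\frac1p\psi^p-a(\Phi)\psi$ with the same initial datum and therefore coincide, and $z_i=\Phi_i/\psi_i^p$ then coincide as well. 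Collecting the constraints, the admissible threshold on $\delta_1$ depends only on $T$, on the bounds for $a$ in \eqref{A:aLip}, and on $c_0,C_0$ from \eqref{psizapriori} (which themselves depend only on the bounds for $g$, $z_0$, and $\alpha_0$).
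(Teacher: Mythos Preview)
Your argument is correct and takes a genuinely different route from the paper. The paper proceeds via the Lasry--Lions monotonicity method: it sets $X(t)=(\psi_1-\psi_2)(z_1-z_2)$, computes $\dot X$ as a quadratic form in $(\psi_1-\psi_2,z_1-z_2)$, and checks by hand (trace and determinant) that the associated $2\times 2$ matrix $\mcl M(\psi,z)$ is strictly negative definite on the a~priori range once $\delta_1$ is small; since $X(0)=0$ and $X(T)\ge 0$ by the monotonicity of $g$, this forces $\psi_1\equiv\psi_2$, $z_1\equiv z_2$. Your approach instead uncovers a hidden decoupling: $\Phi=z\psi^p$ satisfies the autonomous scalar equation $\dot\Phi=-(p-1)a(\Phi)\Phi$, so the whole system is parametrized by the single shooting datum $\psi(0)$; a barrier argument on $\rho=\psi_2/\psi_1$ (with $A=a(\Phi_2)-a(\Phi_1)$ absorbed by the smallness of $\delta_1$) then propagates a uniform gap $\psi_2-\psi_1\ge c\eps_0$, and the logarithmic formula for $z_i(T)$ together with $g$ nondecreasing closes the contradiction. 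The paper's proof is the more portable one---it is the standard forward--backward energy estimate and does not rely on the special algebra of this system---while yours is more structural, reveals an exact reduction to a scalar ODE that the paper does not mention, and in fact uses only the upper bound in \eqref{A:smalla}. One small cosmetic point: your inequality $\log\rho(0)\ge\eps_0/C_0$ should read $\log\rho(0)\ge c\,\eps_0$ for a constant $c>0$ depending on $c_0,C_0$ (since $\log(1+x)\ge x$ fails); this is harmless for the barrier step.
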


\begin{proof}
	To simplify the notation, let us write, for $\psi,z \in \RR_+$,
	\[
		A(\psi,z) := \frac{1}{p}\psi^p - a \pars{ z \psi^p}\psi \quad \text{and} \quad B(\psi,z) = \psi^{p-1} - a(z\psi^p),
	\]
	so that \eqref{E:pequalsq} can be written as
	\begin{equation}\label{E:pequalsqsimple}
	\begin{dcases}
		- \dot \psi + A(\psi,z) = 0 & \psi(T) = g(z(T)),\\
		\dot z + z B(\psi,z) = 0 & z(0) = z_0.
	\end{dcases}
	\end{equation}
	Given two solutions $(\psi_1,z_1)$ and $(\psi_2,z_2)$ of \eqref{E:pequalsqsimple}, define
	\[
		X(t) = (\psi_1(t) - \psi_2(t))(z_1(t) - z_2(t)) \quad \text{for } t \in [0,T].
	\]
	For $\tau \in [0,1]$, define also
	\[
		\psi^\tau := \tau \psi_1 + (1-\tau)\psi_2 \quad \text{and} \quad z^\tau := \tau z_1 + (1-\tau) z_2.
	\]
	Then
	\begin{align*}
		\dot X(t) &= \pars{ A(\psi_1,z_1) - A(\psi_2,z_2)}(z_1-z_2) - (\psi_1 - \psi_2)\pars{ z_1B(\psi_1,z_1) - z_2 B(\psi_2,z_2)}\\
		&= - \int_0^1 z^\tau B_\psi(\psi^\tau,z^\tau) d\tau (\psi_1 - \psi_2)^2 \\
		&+ \int_0^1 \pars{ A_\psi(\psi^\tau,z^\tau) - B(\psi^\tau,z^\tau) - z^\tau B_z(\psi^\tau,z^\tau) }d\tau (\psi_1 - \psi_2)(z_1-z_2) \\
		&+ \int_0^1 A_z(\psi^\tau,z^\tau)d\tau (z_1-z_2)^2.
	\end{align*}
	We claim that, if $\delta_1$ in \eqref{A:smalla} is sufficiently small, then, for some constant $\eps_0 > 0$,
	\[
		\dot X(t) \le -\eps_0\pars{ (\psi_1(t) - \psi_2(t) )^2 + (z_1(t) - z_2(t))^2 },
	\]
	which will follow in turn from the strict negativity of matrix
	\[
		\mcl M(\psi,z) := 
		\begin{pmatrix}
			-zB_\psi(\psi,z) & \frac{1}{2} \pars{ A_\psi(\psi,z) - B(\psi,z) - zB(\psi,z)} \\
			\frac{1}{2} \pars{ A_\psi(\psi,z) - B(\psi,z) - zB(\psi,z)} & A_z(\psi,z)
		\end{pmatrix}
	\]
	for all $\psi,z$ satisfying \eqref{psizapriori}.
	We note that
	\[
		A_z(\psi,z) = -a'(z\psi^p)\psi^{p+1} \le -\delta_0 z^{-1/p} \psi^p \le - \delta_0 \frac{c_0^p}{C_0^{1/p}}
	\]
	and
	\begin{align*}
		zB_\psi(\psi,z) &= z\psi^{p-2}\left[ p-1 - p z \psi a'(z\psi^p) \right]\\
		&\ge z \psi^{p-2} \left[ p-1 - \delta_1 p z^{1 - 1/p} \right] \\
		&\ge z \psi^{p-2} \left[ p-1 - C_0^{1 - 1/p} p \delta_1 \right],
	\end{align*}
	and, therefore, $\tr \mcl M(\psi,z) < 0$ as long as
	\[
		\delta_1 \le \frac{p-1}{pC_0^{1 - 1/p}}.
	\]
	We also compute
	\[
		A_\psi(\psi,z) - B(\psi,z) - zB(\psi,z) = -(p-1)z \psi^p a'(z\psi^p),
	\]
	and, hence,
	\begin{align*}
		\det \mcl M(\psi,z) &= a'(z\psi^p)z\psi^{2p-1}\left[ p-1 - p z \psi a'(z\psi^p) \right] - \frac{(p-1)^2}{4}z^2 \psi^{2p}a'(z\psi^p)^2\\
		&= a'(z\psi^p)z\psi^{2p-1} \left[ p-1 - \pars{ p + \frac{(p-1)^2}{4} } z \psi a'(z\psi^p) \right]\\
		&\ge a'(z\psi^p)z\psi^{2p-1} \left[ p-1 - \delta_1 \pars{ p + \frac{(p-1)^2}{4}} z^{1-1/p} \right] \\
		&\ge a'(z\psi^p)z\psi^{2p-1} \left[ p-1 - C_0^{1 - 1/p} \delta_1 \pars{ p + \frac{(p-1)^2}{4}} \right].
	\end{align*}
	Further restricting $\delta_1$ yields
	\[
		\det \mcl M(\psi,z) > 0,
	\]
	and we conclude that the largest eigenvalue $\mcl M(\psi,z)$ is negative and bounded away from zero.
	
	As a consequence, we find that, using the fact that $z_1(0) = z_2(0) = z_0$,
	\begin{align*}
		\pars{g(z_1(T)) - g(z_2(T))}(z_1(T) - z_2(T)) &= X(T) - X(0) = \int_0^T \dot X(t)dt \\
		&\le -\eps_0 \int_0^T \pars{ (\psi_1(t) - \psi_2(t))^2 + (z_1(t) - z_2(t))^2 }dt.
	\end{align*}
	By \eqref{A:gmonotonepositive}, we get $\psi_1 = \psi_2$ and $z_1 = z_2$, as desired.
\end{proof}

\section{Small noise expansions}\label{S:smallnoise}

We return to the setting of the generalized master equation in a finite state space as in Section \ref{S:finite}, and add a small noise term, which, in general, breaks the algebraic conditions that allow for a reduced problem. This motivates comparing the solution of the noisy mean field games problem to a formal small noise expansion.

\subsection{A stability result}

For smooth, monotone maps $(G,F): \RR^{2N} \to \RR^{2N}$ and $U_0 : \RR^N \to \RR^N$, an affine map $\mcl T: \RR^N \to \RR^N$, and $\lambda > 0$, we consider the equation
\begin{equation}\label{E:finitestatenoise}
	\del_t U + \left[ F(x,U) \cdot \nabla_x \right]U + \lambda \pars{ U(t,x) - \mcl T^* U(t,\mcl Tx)} = G(x,U) \quad \text{in } \RR^N \times (0,T], \quad U(0,\cdot) = U_0.
\end{equation}
This is similar to the equation \eqref{E:finitestate} from Section \ref{S:finite}. The interpretation of the term involving $\mcl T$ is that, at random times with an exponential law of parameter $\lambda$, the players are rearranged by the map $\mcl T$.

The inclusion of the map $\mcl T$ as in \eqref{E:finitestatenoise} is just one of many ways to model a common noise effect in finite state space mean field games. Other examples can be found in \cite{BLL}. 

We assume a stricter sort of monotonicity for $G$; in particular, for some $\alpha > 0$,
\begin{equation}\label{A:GFalphamonotone}
	\langle G(x,U) - G(y,V),x-y \rangle + \langle F(x,U) - F(y,V),U-V \rangle \ge \alpha|x-y|^2 \quad \text{for all }(x,y,U,V) \in \RR^{4N}.
\end{equation}
Under the above assumptions, \eqref{E:finitestatenoise} has a unique classical solution, and moreover \cite{BLL,BLLplanning}, there exists a constant $L_0 > 0$ depending only on $\alpha$, $T$, $\nor{D_x F}{\oo}$, $\nor{D_x G}{\oo}$, $\nor{DU_0}{\oo}$, $\norm{\mcl T}$, and an upper bound for $\lambda$ such that
\begin{equation}\label{Lipschitzbound}
	\sup_{t \in [0,T]} \nor{D_x U(t,\cdot)}{\oo} \le L_0.
\end{equation}

We next suppose that, for some smooth and bounded $R: [0,T] \times \RR^N \to \RR^N$, $V:[0,T] \times \RR^N \to \RR^N$ satisfies
\begin{equation}\label{E:finitestatenoiseperturbed}
	\del_t V + \left[ F(x,V) \cdot \nabla_x \right]V + \lambda \pars{ V(t,x) - \mcl T^* V(t,\mcl Tx)} = G(x,V) + R(t,x) \quad \text{in } \RR^N \times (0,T], \quad V(0,\cdot) = U_0.
\end{equation}
We then have the following result:

\begin{theorem}\label{T:stability}
	Assume \eqref{A:GFalphamonotone} and that $U$ and $V$ solve respectively \eqref{E:finitestatenoise} and \eqref{E:finitestatenoiseperturbed}. Then
	\[
		\sup_{(t,x) \in [0,T] \times \RR^N} \abs{ U(t,x) - V(t,x)} \le \pars{ \frac{L_0 T}{\alpha}}^{1/2}\nor{R}{\oo}.
	\]
\end{theorem}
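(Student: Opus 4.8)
The plan is to run the method of characteristics for the two equations simultaneously, together with the strong monotonicity assumption \eqref{A:GFalphamonotone}; the a priori Lipschitz bound \eqref{Lipschitzbound} and two uses of the Cauchy--Schwarz inequality then convert the resulting estimate into the pointwise bound. Because of the term involving $\mcl T$, the characteristic system of \eqref{E:finitestatenoise} is a system of \emph{random} (jump) ordinary differential equations: on a probability space carrying a Poisson process $(N_t)_t$ of intensity $\lambda$ with jump times $\tau_1 < \tau_2 < \cdots$, and for a fixed starting point $\xi \in \RR^N$, I would let $(X_t,P_t)$ solve, between consecutive jump times, $\dot X_t = F(X_t,P_t)$ and $\dot P_t = G(X_t,P_t) - \lambda\pars{ P_t - \mcl T^* U(t,\mcl T X_t)}$, with $X_{\tau_k} = \mcl T X_{\tau_k^-}$ and $P_{\tau_k} = U(\tau_k,\mcl T X_{\tau_k^-})$ at the jump times; one checks directly from \eqref{E:finitestatenoise} that then $P_t = U(t,X_t)$ for all $t$. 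Let $(Y_t,Q_t)$ be built in exactly the same way, from the \emph{same} $\xi$ and the \emph{same} Poisson process, but with $U$ replaced by $V$ and $G$ by $G + R$; then $Q_t = V(t,Y_t)$, while $X_0 = Y_0 = \xi$ and $P_0 = Q_0 = U_0(\xi)$, so $\langle P_0 - Q_0, X_0 - Y_0\rangle = 0$.

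The heart of the argument is the evolution of $\EE\langle P_t - Q_t, X_t - Y_t\rangle$. Apart from the terms carrying the factor $\lambda$, its absolutely continuous part is
\[
	\EE\Big[ \langle G(X_t,P_t) - G(Y_t,Q_t),\, X_t - Y_t\rangle + \langle F(X_t,P_t) - F(Y_t,Q_t),\, P_t - Q_t\rangle - \langle R(t,Y_t),\, X_t - Y_t\rangle \Big],
\]
so \eqref{A:GFalphamonotone}, applied to the pair $(X_t,P_t),(Y_t,Q_t) \in \RR^{2N}$, bounds the first two terms below by $\alpha\,\EE|X_t - Y_t|^2$. To this one must add the contribution of the common noise, namely the $\lambda$-terms in the drifts of $P$ and $Q$ together with the compensator (with respect to the Poisson clock) of the jump of $\langle P - Q, X - Y\rangle$; here one uses that $X$ and $Y$ jump by the same affine map $\mcl T$ (so that $X - Y$ is acted on by its linear part) and the identity $\langle \mcl T^* w, z\rangle = \langle w, \mcl T z\rangle$ to verify that this contribution does not spoil the sign of the estimate. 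This jump bookkeeping is the step I expect to be the most delicate, and it is the only place where the precise form of the noise in \eqref{E:finitestatenoise} matters.

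Granting a differential inequality of the form $\frac{d}{dt}\EE\langle P_t - Q_t, X_t - Y_t\rangle \ge \alpha\,\EE|X_t - Y_t|^2 - \nor{R}{\oo}\,\EE|X_t - Y_t|$, I would integrate in time. At the initial time the pairing vanishes; at time $t_0$ one writes $P_{t_0} - Q_{t_0} = \big(U(t_0,X_{t_0}) - U(t_0,Y_{t_0})\big) + (U - V)(t_0, Y_{t_0})$ and controls the first bracket by $L_0|X_{t_0} - Y_{t_0}|$ using \eqref{Lipschitzbound}. Using $\EE\langle R(t, Y_t), X_t - Y_t\rangle \le \nor{R}{\oo}\pars{\EE|X_t - Y_t|^2}^{1/2}$ and then Cauchy--Schwarz in the time integral, the monotonicity estimate yields a bound of the form $\int_0^{t_0}\EE|X_t - Y_t|^2\,dt \le C\, t_0\,\nor{R}{\oo}^2/\alpha^2$. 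Feeding this back — and using that, since the characteristic flow of $U$ is surjective (a consequence of the monotonicity of $(G,F)$, as noted after \eqref{E:finchars}), the supremum of $|U - V|$ over $x$ is attained along characteristics — one arrives at $\sup_{[0,T]\times\RR^N}|U - V| \le (L_0 T/\alpha)^{1/2}\nor{R}{\oo}$. Besides the noise bookkeeping, this final passage from the time-integrated $L^2$ control of the characteristic separation to the pointwise supremum bound, with exactly the constant $(L_0 T/\alpha)^{1/2}$, is the second point that needs care.
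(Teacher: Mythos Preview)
Your plan shares the paper's key idea---tracking the pairing $\langle U(t,x)-V(t,y),x-y\rangle$ and using \eqref{A:GFalphamonotone}---but the execution has two genuine gaps.

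\emph{First, the jump bookkeeping does not cancel.} With your setup, between jumps the $\lambda$-part of $\dot P-\dot Q$ contributes $-\lambda\langle (P-Q)-\mcl T^*\big(U(t,\mcl TX)-V(t,\mcl TY)\big),\,X-Y\rangle = \lambda\big(W(t,\mcl TX,\mcl TY)-W(t,X,Y)\big)$, where $W(t,x,y)=\langle U(t,x)-V(t,y),x-y\rangle$. The compensator of the jump of $\langle P-Q,X-Y\rangle$ contributes the \emph{same} quantity again, so the total $\lambda$-contribution to $\tfrac{d}{dt}\EE\langle P-Q,X-Y\rangle$ is $2\lambda\,\EE\big[W(t,\mcl TX,\mcl TY)-W(t,X,Y)\big]$, which has no sign in general. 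You are double-counting: the term $-\lambda(P-\mcl T^*U(t,\mcl TX))$ in $\dot P$ already comes from the PDE for $U$, and it does not compensate the Poisson jump you additionally impose on $X$.

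\emph{Second, the final extraction cannot close.} Starting both characteristics from the same $\xi$ fixes $X_{t_0}-Y_{t_0}$ by the dynamics; you cannot choose its direction or magnitude. Your split $P_{t_0}-Q_{t_0}=(U(t_0,X_{t_0})-U(t_0,Y_{t_0}))+(U-V)(t_0,Y_{t_0})$ then puts $\nor{U-V}{\oo}$ only on the \emph{upper-bound} side of the inequality, so the argument cannot be closed to a bound on $\nor{U-V}{\oo}$; surjectivity of the flow gives you the point $Y_{t_0}$ but not the direction $X_{t_0}-Y_{t_0}$.

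The paper avoids both issues by working directly with the two-variable function $W(t,x,y)$ and the maximum principle for the linear equation \eqref{E:W}: at a spatial minimum of $W$ one has $W(t,x,y)\le W(t,\mcl Tx,\mcl Ty)$, so the nonlocal term has the right sign, and the monotonicity bound on the right-hand side gives the pointwise inequality $W(t,x,y)\ge -\nor{R}{\oo}^2 T/(4\alpha)$ for \emph{all} $(t,x,y)$. One then takes $x=y+\delta\xi$ with $\xi\in B_1$ and $\delta>0$ free, uses \eqref{Lipschitzbound} to replace $U(t,y+\delta\xi)$ by $U(t,y)$ at cost $L_0\delta$, and optimizes over $\delta$ to obtain the stated constant. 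The freedom in $(\delta,\xi)$ is exactly what your same-starting-point characteristics lack.
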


\begin{proof}
	Define
	\[
		W(t,x,y) = \langle U(t,x) - V(t,y), x-y \rangle
	\]
	which satisfies the equation
	\begin{equation}\label{E:W}
		\left\{
		\begin{split}
			&\del_t W + F(x,U) \cdot \nabla_x W + F(y,V) \cdot \nabla_y W + \lambda \pars{ W - W(\cdot,\mcl Tx,\mcl Ty)} \\
			&\quad = \langle G(x,U) - G(y,V),x-y \rangle + \langle F(x,U) - F(y,V), U - V \rangle - \langle R(t,x),x-y \rangle, \\
			&W(0,x,y) = \langle U_0(x) - U_0(y),x-y \rangle.
		\end{split}
		\right.
	\end{equation}
	In view of \eqref{A:GFalphamonotone}, the right-hand side satisfies
	\begin{align*}
		 &\langle G(x,U) - G(y,V),x-y \rangle + \langle F(x,U) - F(y,V), U - V \rangle - \langle R(t,x),x-y \rangle\\
		 &\ge \alpha |x-y|^2 - \nor{R}{\oo}|x-y| \ge - \frac{\nor{R}{\oo}^2}{4 \alpha}.
	\end{align*}
	As a consequence, the maximum principle implies that, for all $(t,x,y) \in [0,T] \times \RR^N \times \RR^N$,
	\[
		\langle U(t,x) - V(t,y), x-y \rangle \ge - \frac{\nor{R}{\oo}^2T}{4 \alpha}.
	\]
	Fix $\delta > 0$ and $\xi \in B_1$, and let $x = y + \delta \xi$. Then this inequality becomes
	\[
		\langle U(t,y + \delta \xi) - V(t,y), \xi \rangle \ge - \frac{ \nor{R}{\oo}^2T}{4 \alpha \delta}.
	\]
	Using the Lipschitz bound \eqref{Lipschitzbound}, we then have
	\[
		\langle U(t,y) - V(t,y), \xi \rangle \ge - \frac{\nor{R}{\oo}^2T}{4 \alpha \delta} - L_0 \delta,
	\]
	and so, because $\xi$ is arbitrary,
	\[
		\nor{U - V}{\oo} \le \frac{\nor{R}{\oo}^2T}{4 \alpha \delta} + L_0 \delta.
	\]
	The estimate is optimized, and the result proved, upon choosing
	\[
		\delta := \pars{\frac{T}{4 \alpha L_0} }^{1/2} \nor{R}{\oo}.
	\]
\end{proof}

\subsection{Small noise expansions}

We apply the stability estimate from the previous sub-section in order to prove some small-noise expansion results. 

We consider, for some $\eps \in (0,1)$, the equation
\begin{equation}\label{E:finitestatesmallnoise}
	\del_t U^\eps + \left[ F(x,U^\eps) \cdot \nabla_x \right]U^\eps + \eps \pars{ U^\eps(t,x) - \mcl T^* U^\eps(t,\mcl Tx)} = G(x,U^\eps) \quad \text{in } \RR^N \times (0,T], \quad U^\eps(0,\cdot) = U_0
\end{equation}
as well as its deterministic counterpart
\begin{equation}\label{E:finitestatenonoise}
	\del_t U + \left[ F(x,U) \cdot \nabla_x \right]U = G(x,U) \quad \text{in } \RR^N \times (0,T], \quad U(0,\cdot) = U_0.
\end{equation}
We also consider the solution $V: [0,T] \times \RR^N \to \RR^N$ of the linearized problem
\begin{equation}\label{E:linearized}
	\left\{
	\begin{split}
	&\del_t V + \left[ F(x,V) \cdot \nabla \right] U + \pars{ \nabla_u F(x,U) \cdot \nabla U - \nabla_u G(x,U)} V + U - T^*U(t,Tx) = 0 \quad \text{in } \RR^N \times [0,T],\\
	&V(0,x) = 0 \quad \text{in } \RR^N.
	\end{split}
	\right.
\end{equation}
Formally, the first-order expansion
\[
	V^\eps(t,x) := U(t,x) + \eps V(t,x)
\]
approximates $U^\eps$ to an error of order better than $\eps$, which we make precise with the following result.

\begin{theorem}\label{T:smallnoise}
	There exists a constant $C > 0$ depending only on $\alpha$, $T$, $\nor{DF}{C^2}$, $\nor{DG}{C^2}$, $\nor{DU_0}{\oo}$, and $\norm{\mcl T}$ such that
	\[
		\sup_{(t,x) \in [0,T] \times \RR^N} \abs{U^\eps(t,x) - V^\eps(t,x)} \le C \eps^2.
	\]
\end{theorem}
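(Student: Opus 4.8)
The plan is to verify that the first-order ansatz $V^\eps := U + \eps V$ satisfies the perturbed equation \eqref{E:finitestatenoiseperturbed} (with $\lambda = \eps$) up to a remainder $R^\eps$ with $\nor{R^\eps}{\oo} = O(\eps^2)$, and then to conclude with Theorem \ref{T:stability}. Since $U(0,\cdot) = U_0$ and $V(0,\cdot) = 0$ we have $V^\eps(0,\cdot) = U_0$, so the initial data agree. As $\eps \in (0,1)$, the constant $L_0$ attached to \eqref{E:finitestatenoise} with $\lambda = \eps$ may be chosen independently of $\eps$, so that Theorem \ref{T:stability} will yield
\[
	\nor{U^\eps - V^\eps}{\oo} \le \pars{ \frac{L_0 T}{\alpha} }^{1/2} \nor{R^\eps}{\oo};
\]
everything thus reduces to producing $R^\eps$ and estimating $\nor{R^\eps}{\oo} \le C\eps^2$ for a constant $C$ of the form stated.

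To identify $R^\eps$, substitute $V^\eps = U + \eps V$ into the left-hand side of \eqref{E:finitestatesmallnoise} and expand $F$ and $G$ in their second argument by Taylor's theorem with integral remainder,
\[
	F(x,U+\eps V) = F(x,U) + \eps\,\nabla_u F(x,U)V + \eps^2 Q_F^\eps, \qquad G(x,U+\eps V) = G(x,U) + \eps\,\nabla_u G(x,U)V + \eps^2 Q_G^\eps,
\]
with $Q_F^\eps := \int_0^1 (1-s)\,\nabla^2_{uu}F(x,U+s\eps V)[V,V]\,ds$ and $Q_G^\eps$ defined analogously, and also expand $[F(x,V^\eps)\cdot\nabla_x]V^\eps$ and the noise term $\eps\pars{ V^\eps(t,x) - \mcl T^* V^\eps(t,\mcl Tx)}$ in powers of $\eps$. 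Collecting terms, the $O(1)$ contribution vanishes because $U$ solves \eqref{E:finitestatenonoise}, and the $O(\eps)$ contribution reproduces the left-hand side of \eqref{E:linearized}, hence vanishes; what survives is
\[
	R^\eps = \eps^2\pars{ [(\nabla_u F(x,U)V)\cdot\nabla_x]V + [Q_F^\eps\cdot\nabla_x]U - Q_G^\eps + V(t,x) - \mcl T^* V(t,\mcl Tx) } + \eps^3\,[Q_F^\eps\cdot\nabla_x]V.
\]

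It remains to bound $R^\eps$. We have $\sup_t \nor{\nabla_x U(t,\cdot)}{\oo} \le L_0$ from \eqref{Lipschitzbound}, and standard a priori estimates for the linear transport system \eqref{E:linearized} — obtained along characteristics via Gronwall's inequality, using the regularity of $U$ and the structural assumptions — provide bounds on $\nor{V}{\oo}$ and $\nor{\nabla_x V}{\oo}$ of the admissible form. Combining this with $|Q_F^\eps| \le \tfrac12\nor{\nabla^2 F}{\oo}\nor{V}{\oo}^2$, $|Q_G^\eps| \le \tfrac12\nor{\nabla^2 G}{\oo}\nor{V}{\oo}^2$, and $\nor{\mcl T^* V(t,\mcl Tx)}{\oo} \le \norm{\mcl T}\nor{V}{\oo}$, each term in the formula for $R^\eps$ is, for $\eps < 1$, bounded by $\eps^2$ times a constant depending only on the quantities in the statement; hence $R^\eps$ is smooth and bounded with $\nor{R^\eps}{\oo} \le C\eps^2$, and Theorem \ref{T:stability} applied to $U^\eps$ and $V^\eps$ finishes the proof.

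The substitution and the bookkeeping of powers of $\eps$ — in particular the verification that the $O(\eps)$ terms coincide with the left-hand side of \eqref{E:linearized} — are routine. The one point demanding genuine work is the uniform a priori control of $V$ and $\nabla_x V$ in terms of the data; this mirrors the estimate \eqref{Lipschitzbound} but must be redone for the linearized problem \eqref{E:linearized}, and is the main obstacle.
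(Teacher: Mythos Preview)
Your proposal is correct and follows essentially the same route as the paper: substitute $V^\eps = U + \eps V$ into \eqref{E:finitestatesmallnoise}, observe that the $O(1)$ and $O(\eps)$ contributions vanish by \eqref{E:finitestatenonoise} and \eqref{E:linearized}, collect the residual as $R^\eps$ with $\nor{R^\eps}{\oo} \le C\eps^2$, and invoke Theorem~\ref{T:stability}. Your version is in fact more explicit than the paper's (which simply asserts that ``routine computations'' produce such an $R^\eps$), and you correctly flag that the a~priori bounds on $V$ and $\nabla_x V$ are the only non-formal input required.
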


\begin{proof}
	Routine computations reveal that there exists $R^\eps: [0,T] \times \RR^N  \to \RR^N$ such that, for some constant $C > 0$ depending only on $\nor{D^2 F}{\oo}$ and $\nor{D^2G}{\oo}$,
	\[
		\sup_{(t,x) \in \RR^N} |R^\eps(t,x)| \le C\eps^2
	\]
	and
	\begin{equation}\label{E:finitestatenoiseapprox}
		\left\{
		\begin{split}
		&\del_t V^\eps + \left[ F(x,V^\eps) \cdot \nabla_x \right]V^\eps + \eps \pars{ V^\eps(t,x) - \mcl T^* V^\eps(t,\mcl Tx)} = G(x,V^\eps) + R^\eps(t,x) \quad \text{in } \RR^N \times (0,T], \\
		&V^\eps(0,\cdot) = U_0.
		\end{split}
		\right.
	\end{equation}
	The result now follows from Theorem \ref{T:stability}.
\end{proof}

We finish this section by relating the above discussion to the reduction phenomena observed in Section \ref{S:finite}. In particular, it is unreasonable to expect that, if $L$ is the linear map defined there, then, for some $\tilde T: \RR^n \to \RR^n$ such that
\[
	LT = \tilde T L.
\]
This implies that the noisy map $T$ maps fibers of $L$ to fibers of $L$, and, in most practical applications, there is no reason to expect that the source of common noise interacts so well with the reduced quantities. In this case, even though \eqref{E:finitestatesmallnoise} does not fully reduce, Theorem \ref{T:smallnoise} gives a way to relate, up to small error, the reduced version of \eqref{E:finitestatenonoise} to the solution of \eqref{E:finitestatesmallnoise}.

\bibliography{reducedMFG}{}
\bibliographystyle{acm}

\end{document}